\documentclass[12pt]{amsart}
\usepackage{amscd, amssymb, pgf, tikz, hyperref}

\newcommand{\field}[1]{\mathbb{#1}}
\newcommand{\CC}{\field{C}}
\newcommand{\NN}{\field{N}}

\newcommand{\TT}{\field{T}}
\newcommand{\ZZ}{\field{Z}}

\newcommand{\Aa}{\mathcal{A}}
\newcommand{\Bb}{\mathcal{B}}

\newcommand{\Dd}{\mathcal{D}}
\newcommand{\Ee}{\mathcal{E}}
\newcommand{\Ff}{\mathcal{F}}
\newcommand{\Gg}{\mathcal{G}}
\newcommand{\Hh}{\mathcal{H}}
\newcommand{\Kk}{\mathcal{K}}
\newcommand{\Ll}{\mathcal{L}}
\newcommand{\Mm}{\mathcal{M}}
\newcommand{\Nn}{\mathcal{N}}
\newcommand{\Oo}{\mathcal{O}}
\newcommand{\Rr}{\mathcal{R}}

\newcommand{\Tt}{\mathcal{T}}

\newcommand{\la}{\langle}
\newcommand{\ra}{\rangle}
\newcommand{\ve}{\varepsilon}
\newcommand{\ds}{\displaystyle}

\pagestyle{headings}

\newtheorem{thm}{Theorem}[section]
\newtheorem{cor}[thm]{Corollary}
\newtheorem{lem}[thm]{Lemma}
\newtheorem{prop}[thm]{Proposition}

\theoremstyle{definition}
\newtheorem{dfn}[thm]{Definition}

\theoremstyle{remark}
\newtheorem{rmk}[thm]{Remark}

\newtheorem{example}[thm]{Example}

\newtheorem*{examples*}{Examples}

\numberwithin{equation}{subsection}

\title{$C^*$-algebras  from  $k$ group representations}

\author{Valentin Deaconu}
\address{Valentin Deaconu \\ Department of Mathematics \& Statistics\\ University
of Nevada\\ Reno NV 89557-0084\\ USA} \email{vdeaconu@unr.edu}

\keywords{Group representation; Character table; Product system; Rank $k$ graph;  Cuntz-Pimsner algebra.}

\subjclass{Primary 46L05.}

\begin{document}
\begin{abstract}We introduce certain $C^*$-algebras and $k$-graphs associated to $k$ finite dimensional unitary representations $\rho_1,...,\rho_k$ of a compact group $G$. We define a higher rank Doplicher-Roberts algebra $\Oo_{\rho_1,...,\rho_k}$, constructed from intertwiners of tensor powers of  these representations.
Under certain conditions, we show that this $C^*$-algebra is  isomorphic to a corner in the $C^*$-algebra of  a row finite rank $k$ graph $\Lambda$ with no sources. For $G$ finite and $\rho_i$ faithful of dimension at least $2$, this graph is irreducible, it has vertices $\hat{G}$ and the edges are determined by $k$ commuting matrices obtained from the character table of the group.
We illustrate with some examples when $\Oo_{\rho_1,...,\rho_k}$ is simple and purely infinite, and with some $K$-theory computations.
\end{abstract}
\maketitle
\section{introduction}

\bigskip

The study of graph $C^*$-algebras was motivated among other reasons by the Doplicher-Roberts algebra $\Oo_\rho$ associated to a group representation $\rho$, see \cite{MRS, KPRR}. It is natural to imagine that a rank $k$ graph is related to a fixed set of $k$ representations $\rho_1,...,\rho_k$  satisfying certain properties.  

Given a  compact group $G$ and $k$ finite dimensional unitary representations $\rho_i$ on Hilbert spaces $\mathcal H_i$ of dimensions $d_i$ for $i=1,...,k$, we first construct a product system $\mathcal E$ indexed by the semigroup $(\NN^k,+)$ with fibers $\mathcal E_{n}=\mathcal H_1^{\otimes n_1}\otimes\cdots \otimes\mathcal H_k^{\otimes n_k}$ for $n=(n_1,...,n_k)\in \NN^k$.  Using the representations $\rho_i$,  the group $G$ acts on each fiber of $\Ee$ in a compatible way, so we obtain an action of $G$ on the Cuntz-Pimsner algebra $\Oo(\Ee)$. This action determines the crossed product $\Oo(\Ee)\rtimes G$ and the fixed point algebra $\Oo(\Ee)^G$. 

 Inspired from Section 7 of \cite{KPRR} and Section 3.3 of \cite{AM}, we  define a higher rank Doplicher-Roberts algebra $\mathcal O_{\rho_1,...,\rho_k}$ associated to the representations $\rho_1,...,\rho_k$. This algebra is constructed from intertwiners $Hom (\rho^n, \rho^m)$, where $\rho^n=\rho_1^{\otimes n_1}\otimes\cdots\otimes  \rho_k^{\otimes n_k}$ acting on $\Hh^n=\mathcal H_1^{\otimes n_1}\otimes\cdots\otimes \mathcal H_k^{\otimes n_k}$ for $n=(n_1,...,n_k)\in \mathbb N^k$. We show that $\mathcal O_{\rho_1,...,\rho_k}$ is isomorphic to $\Oo(\Ee)^G$.

If the representations $\rho_1,...,\rho_k$ satisfy some mild conditions,  we  construct  a $k$-coloured graph $\Lambda$ with vertex space $\Lambda^0=\hat{G}$, and with  edges $\Lambda^{\ve_i}$ given by some matrices $M_i$ indexed by $\hat{G}$. Here $\ve_i=(0,...,1,...,0)\in\NN^k$ with $1$ in position $i$ are the canonical generators. The matrices $M_i$ have entries \[M_i(w,v)=|\{e\in \Lambda^{\ve_i}: s(e)=v, r(e)=w\}|=\dim Hom(v,w\otimes \rho_i),\] the multiplicity of $v$ in $w\otimes \rho_i$ for  $i=1,...,k$. 
The matrices $M_i$ commute because $\rho_i\otimes\rho_j\cong \rho_j\otimes \rho_i$ for all $i,j=1,...,k$ and therefore \[\dim Hom(v,w\otimes \rho_i\otimes\rho_j)=\dim Hom(v,w\otimes \rho_j\otimes\rho_i).\] 

%If $\rho_i^*=\rho_i$, where $\rho_i^*(g)=\rho_i(g^{-1})^t$ is the dual representation, these matrices are also symmetric because \[\dim Hom(w, v\otimes \rho_i)=\dim Hom(\rho^*_i\otimes w,v).\] For unitary representations, we have  $\rho_i^*=\bar{\rho_i}$, so if the representations are real, then $M_i^t=M_i$. 
By a particular choice of isometric intertwiners in $Hom(v,w\otimes \rho_i)$ for each $v,w\in \hat{G}$ and for each $i$, we can choose  bijections \[\lambda_{ij}:\Lambda^{\ve_i}\times_{\Lambda^0}\Lambda^{\ve_j}\to \Lambda^{\ve_j}\times_{\Lambda^0}\Lambda^{\ve_i},\] obtaining a set of commuting squares for $\Lambda$. 
For $k\ge 3$, we need to check the associativity of the commuting squares,  i.e.
\[(id_\ell\times \lambda_{ij})(\lambda_{i\ell}\times id_j)(id_i\times \lambda_{j\ell})=(\lambda_{j\ell}\times id_i)(id_j\times \lambda_{i\ell})(\lambda_{ij}\times id_\ell)\]
as bijections from $\Lambda^{\ve_i}\times_{\Lambda^0}\Lambda^{\ve_j}\times_{\Lambda^0}\Lambda^{\ve_\ell}$ to $\Lambda^{\ve_\ell}\times_{\Lambda^0}\Lambda^{\ve_j}\times_{\Lambda^0}\Lambda^{\ve_i}$ for all $i<j<\ell$, see \cite{FS}. If these conditions are satisfied, we obtain a rank $k$ graph $\Lambda$, which is row-finite with no sources, but in general not unique.  

In many situations,  $\Lambda$ is cofinal and it satisfies the aperiodicity condition, so $C^*(\Lambda)$ is simple. For $k=2$, the $C^*$-algebra $C^*(\Lambda)$ is unique when it is simple and purely infinite, because   its $K$-theory depends only on the matrices $M_1, M_2$. It is an open question what happens for $k\ge 3$.

Assuming that the representations $\rho_1,...,\rho_k$ determine a rank $k$ graph $\Lambda$, we prove that the Doplicher-Roberts algebra $\mathcal O_{\rho_1,...,\rho_k}$ is isomorphic to a corner of $C^*(\Lambda)$, so if $C^*(\Lambda)$ is simple, then $\mathcal O_{\rho_1,...,\rho_k}$ is Morita equivalent to $C^*(\Lambda)$. In particular cases we can compute its $K$-theory using results from \cite{E}.

 \bigskip
%----------------------------------
\section{The product system}

\bigskip
 Product systems over arbitrary semigroups were introduced by N. Fowler \cite{ F}, inspired by work of W. Arveson, and studied by several authors, see \cite{SY, CLSV, AM}. In this paper, we will mostly be interested  in  product systems $\Ee$ indexed by $( \NN^k , +)$, associated to some representations $\rho_1,...,\rho_k$ of a compact group $G$.
We remind some general definitions and constructions with product systems, but we will consider the Cuntz-Pimsner algebra $\Oo(\Ee)$ and we will mention some properties only in particular cases.
\begin{dfn}
Let $(P, \cdot)$ be a discrete semigroup with identity $e$ and let $A$ be a $C^*$-algebra.
A {\em product system}  of  $C^*$-correspondences over $A$ indexed by $P$
is a semigroup $\Ee=\bigsqcup_{p\in P}\Ee_p$ and a map $\Ee\to P$ such that
\begin{itemize}
\item for each $p\in P$, the fiber $\Ee_p\subset \Ee$ is a  $C^*$-correspondence over $A$
with inner product $\langle\cdot,\cdot\rangle_p$;
\item the identity fiber $\Ee_e$ is $A$ viewed as a $C^*$-correspondence over itself;
\item for $p,q\in P\setminus\{e\}$ the multiplication map \[\Mm_{p,q}:\Ee_p\times \Ee_q\to \Ee_{pq},\;\; \Mm_{p,q}(x,y)= xy\]  induces  an isomorphism
$\Mm_{p,q}:\Ee_p\otimes_A \Ee_q\to \Ee_{pq}$;
\item multiplication in $\Ee$ by elements of $\Ee_e=A$ implements the right and left actions of $A$ on each $\Ee_p$. In particular,  $\Mm_{p,e}$ is an isomorphism.
\end{itemize}
Let $\phi_p:A\to \Ll(\Ee_p)$ be the homomorphism implementing the
left action. The product
system $\Ee$ is said to be {\em essential} if each $\Ee_p$ is an
essential correspondence, i.e. the span of $\phi_p(A)\Ee_p$ is dense in $\Ee_p$ for all $p\in P$. In this case, the map $\Mm_{e,p}$ is also an isomorphism.

If the maps $\phi_p$ take values in $\Kk(\Ee_p)$, then the  product system is called  {\em row-finite} or {\em proper}. If all maps $\phi_p$ are injective, then $\Ee$ is called  {\em faithful}. 
\end{dfn}

\begin{dfn}
Given a product system $\Ee\to P$ over $A$ and a $C^*$-algebra $B$, a map $\psi:\Ee\to B$ is called a {\em Toeplitz representation}
of $\Ee$ if 
\begin{itemize}
\item denoting $\psi_p:=\psi|_{\Ee_p}$, then each  $\psi_p:\Ee_p\to B$ is linear, $\psi_e:A\to B$ is a $*$-homomorphism, and \[\psi_e(\la x,y\ra_p)=\psi_p(x)^*\psi_p(y)\] for all  $x,y\in \Ee_p$; 
\item $\psi_p(x)\psi_q(y)=\psi_{pq}(xy)$ for all $p,q\in P, x\in \Ee_p, y\in \Ee_q$.
\end{itemize}

For each $p\in P$ we write
$\psi^{(p)}$ for the homomorphism $\Kk(\Ee_p)\to B$ obtained by extending the map $\theta_{\xi, \eta}\mapsto \psi_p(\xi)\psi_p(\eta)^*$, where \[\theta_{\xi, \eta}(\zeta)=\xi\la \eta, \zeta\ra.\] 

The
Toeplitz representation $\psi:\Ee\to B$ is {\em Cuntz-Pimsner covariant} if  $\psi^{(p)}(\phi_p(a))=\psi_e(a)$ for all $p\in P$ and all $a\in A$ such that $\phi_p(a)\in \Kk(\Ee_p)$.
\end{dfn}

There is a $C^*$-algebra $\Tt_A(\Ee)$ called the Toeplitz algebra of $\Ee$
and a representation $i_\Ee:\Ee\to \Tt_A(\Ee)$ which is universal in the following sense:
$\Tt_A(\Ee)$ is generated by $i_\Ee(\Ee)$ and
for any representation $\psi :\Ee\to B$ there is a homomorphism $\psi_*:\Tt_A(\Ee)\to B$ such that $\psi_*\circ i_\Ee=\psi$.

There are various extra conditions on a product system $\Ee\to P$  and several other notions of  covariance, which allow to define the Cuntz-Pimsner algebra $\Oo_A(\Ee)$ or the Cuntz-Nica-Pimsner algebra $\Nn\Oo_A(\Ee)$ satisfying certain properties, see \cite{F, SY, CLSV,AM,DK} among others.
We mention that  $\Oo_A(\Ee)$ (or  $\Nn\Oo_A(\Ee)$)
comes with a covariant representation $j_\Ee:\Ee\to \Oo_A(\Ee)$ and is universal in the following sense:
$\Oo_A(\Ee)$ is generated by $j_\Ee(\Ee)$ and
for any covariant representation $\psi :\Ee\to B$ there is a homomorphism $\psi_*:\Oo_A(\Ee)\to B$ such that $\psi_*\circ j_\Ee=\psi$. Under certain conditions, $\Oo_A(\Ee)$ satisfies a gauge invariant uniqueness theorem.

\begin{example}
For  a   product system $\Ee\to P$ with fibers $\Ee_p$  nonzero finitely dimensional Hilbert spaces, in particular $A=\Ee_e=\CC$, let us fix an orthonormal basis $\Bb_p$ in $\Ee_p$. Then a Toeplitz representation $\psi:\Ee\to B$ gives rise to a family of isometries $\{\psi(\xi): \xi\in \Bb_p\}_{p\in P}$ with mutually orthogonal range projections. In this case $\Tt(\Ee)=\Tt_\CC(\Ee)$ is generated by a colection of Cuntz-Toeplitz algebras which interact according to the multiplication maps $\Mm_{p,q}$  in $\Ee$.

A representation $\psi:\Ee\to B$ is  Cuntz-Pimsner covariant if \[ \sum_{\xi\in \Bb_p}\psi(\xi)\psi(\xi)^*=\psi(1)\] for all $p\in P$. 
The Cuntz-Pimsner algebra $\Oo(\Ee)=\Oo_\CC(\Ee)$ is generated by a collection of Cuntz algebras.  N. Fowler proved in \cite{F1} that if the  function $p\mapsto \dim \Ee_p$ is injective, then the algebra ${\mathcal O}(\Ee)$ is simple and purely infinite. For other examples of multidimensional Cuntz algebras, see  \cite{ B}.   
\end{example}
\begin{example}
A row-finite  $k$-graph with no sources $\Lambda$ (see \cite{KP}) determines a product system $\Ee\to \NN^k$ with $\Ee_0=A=C_0(\Lambda^0)$ and $\Ee_n=\overline{C_c(\Lambda^n)}$ for $n\neq 0$ such that we have a $\TT^k$-equivariant isomorphism $\Oo_A(\Ee)\cong C^*(\Lambda)$. Recall that the universal property induces   a gauge action on $\Oo_A(\Ee)$ defined by $\gamma_z(j_\Ee(\xi))=z^nj_\Ee(\xi)$ for $z\in \TT^k$ and $\xi\in \Ee_n$.
\end{example}

The following two definitions and two results are taken from \cite{DHS}, see also \cite{Ka}.

\begin{dfn}
An action $ \beta $ of a locally compact group $ G $ on a product system $ \Ee \to P $ over $A$ is a  family $ (\beta^{p})_{p \in P} $ such that $ \beta^{p} $ is an action of $ G $ on each fiber $\Ee_{p} $ compatible with the action $\alpha=\beta^e$ on $A$, and furthermore, the actions $(\beta^p)_{p\in P}$ are compatible with the multiplication maps $\Mm_{p,q}$ in the sense that 
$$
\beta^{p q}_g(\Mm_{p,q}(x \otimes y)) = \Mm_{p,q}(\beta^{p}_g(x) \otimes \beta^{q}_g(y))
$$
for all $ g \in G $, $ x \in \Ee_{p} $ and $ y \in \Ee_{q} $. 
\end{dfn}

\begin{dfn} \label{cp}
If $ \beta $ is an action of $ G $ on the product system $\Ee \to  P $, we define the crossed product $\Ee \rtimes_{\beta} G $  as the product system indexed by $ P $ with fibers $ \Ee_{p} \rtimes_{\beta^{p}} G $, which are $ C^{\ast} $-correspondences over $ A \rtimes_{\alpha} G $. For $ \zeta \in C_c(G,\Ee_{p}) $ and $ \eta \in C_c(G,\Ee_{q}) $, the product $ \zeta \eta \in C_c(G,\Ee_{p q}) $ is defined  by
$$
(\zeta \eta)(s) = \int_G\Mm_{p,q}(\zeta(t) \otimes \beta^{q}_t(\eta(t^{- 1} s)))dt.
$$
\end{dfn}

\begin{prop}\label{p1}
The set $ \ds \Ee \rtimes_{\beta} G = \bigsqcup_{p \in P} \Ee_{p} \rtimes_{\beta^{p}} G $ with the above multiplication  satisfies all the properties of a product system of $ C^{\ast} $-correspondences over $ A \rtimes_{\alpha} G $.
\end{prop}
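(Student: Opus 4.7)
The strategy is to reduce each product-system axiom for $\Ee\rtimes_\beta G$ to the corresponding axiom for $\Ee$, using the compatibility of the actions $(\beta^p)_{p\in P}$ with the multiplication maps $\Mm_{p,q}$.

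First I would verify the fiber structure. For each $p\in P$ the crossed product $\Ee_p\rtimes_{\beta^p} G$ is a well-known construction, completed from $C_c(G,\Ee_p)$, and is a $C^*$-correspondence over $A\rtimes_\alpha G$ with right action and inner product given by the usual convolution-type formulas. For $p=e$ this reduces to the identity correspondence $A\rtimes_\alpha G$ over itself. So the first two bullets of the product-system definition hold automatically.

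Next I would check that the multiplication in Definition \ref{cp} is well-defined and bilinear at the level of $C_c$-functions, with values in $C_c(G,\Ee_{pq})$. This is a direct computation using that $\Mm_{p,q}$ is $A$-bilinear and that $\beta^q$ is an action by linear isomorphisms. I would then verify that $\zeta\mapsto \zeta\eta$ and $\eta\mapsto \zeta\eta$ extend continuously in the crossed-product norms, producing a bilinear map
\[
\Mm_{p,q}^G:(\Ee_p\rtimes_{\beta^p}G)\times(\Ee_q\rtimes_{\beta^q}G)\to \Ee_{pq}\rtimes_{\beta^{pq}}G.
\]
The key balancing identity to verify is that for $\zeta\in C_c(G,\Ee_p)$, $f\in C_c(G,A)$, $\eta\in C_c(G,\Ee_q)$, one has $(\zeta f)\eta=\zeta(f\eta)$; this follows by unwinding the definitions and using the compatibility $\beta^q_t\circ\phi_q(a)=\phi_q(\alpha_t(a))\circ\beta^q_t$ together with the associativity of convolution over $G$. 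Consequently $\Mm_{p,q}^G$ descends to a map on the balanced tensor product.

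I would then show that $\Mm_{p,q}^G$ is an isomorphism. Surjectivity onto a dense subspace: given $\xi\in C_c(G,\Ee_{pq})$, use the $\Ee_p\otimes_A\Ee_q\cong\Ee_{pq}$ isomorphism pointwise in $s\in G$ together with an integration argument (or a partition-of-unity/approximate-identity argument in $C_c(G,A)$) to write $\xi$ as a limit of finite sums of products $\zeta\eta$. Isometry: compute $\langle \zeta_1\eta_1,\zeta_2\eta_2\rangle_{pq}$ in $A\rtimes_\alpha G$, collapse the inner integrals using the compatibility of $\beta^p,\beta^q$ with $\Mm_{p,q}$ and the inner products in $\Ee_p,\Ee_q$, and recognize the result as $\langle \eta_1,\phi_q(\langle\zeta_1,\zeta_2\rangle_p)\eta_2\rangle_q$, which is the inner product on the balanced tensor product. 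Finally the cases $q=e$ and $p=e$ give that $\Mm_{p,e}^G$ and $\Mm_{e,p}^G$ implement the right and left $A\rtimes_\alpha G$-actions on $\Ee_p\rtimes_{\beta^p}G$.

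The main obstacle, as often with crossed products of correspondences, is the inner-product computation that establishes isometry of $\Mm_{p,q}^G$ on the balanced tensor product; after that, density of $C_c$-functions makes everything extend. The computation itself is bookkeeping with Fubini and the cocycle-free action $\beta$, and no new ideas beyond the compatibility axiom in the definition of an action on a product system are needed.
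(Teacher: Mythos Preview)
The paper does not actually supply a proof of this proposition: it is quoted, together with Proposition~\ref{p2}, from the reference \cite{DHS} (listed as work in progress), with \cite{Ka} given as an additional source. So there is nothing in the paper itself to compare your argument against.

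That said, your outline is the expected direct verification: it is essentially the Hao--Ng construction of the crossed product of a single $C^*$-correspondence (cf.\ \cite{HN} in the paper's bibliography) carried out fiberwise, followed by a check that the resulting fiberwise isomorphisms are compatible with the product-system multiplication. The sketch is sound, and the isometry/surjectivity argument for $\Mm_{p,q}^G$ is the right shape. One point you leave implicit and should state: associativity of the semigroup multiplication on $\Ee\rtimes_\beta G$, i.e.\ that
\[
\Mm_{pq,r}^G\circ(\Mm_{p,q}^G\times\mathrm{id})=\Mm_{p,qr}^G\circ(\mathrm{id}\times\Mm_{q,r}^G).
\]
This follows from the corresponding associativity in $\Ee$, the compatibility axiom $\beta^{pq}_g\circ\Mm_{p,q}=\Mm_{p,q}\circ(\beta^p_g\otimes\beta^q_g)$, and Fubini; it is routine bookkeeping but is part of ``all the properties of a product system'' and deserves a line.
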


\begin{prop}\label{p2}
Suppose that a locally compact group $ G $ acts on a row-finite and faithful product system $ \Ee $ indexed by $ P = (\NN^{k},+) $ via automorphisms $ \beta^{p}_{g} $. Then $ G $ acts on the Cuntz-Pimsner algebra $\Oo_{A}(\Ee) $ via automorphisms denoted by $ \gamma_{g} $. Moreover, if $ G $ is amenable,   then $ \Ee \rtimes_{\beta} G $ is  row-finite and faithful, and
$$
\Oo_{A}(\Ee) \rtimes_{\gamma} G \cong \Oo_{A \rtimes_{\alpha} G}(\Ee \rtimes_{\beta} G).
$$
\end{prop}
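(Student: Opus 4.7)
The plan is to first build $\gamma_g$ on $\Oo_A(\Ee)$ by universality, then establish the second claim via a gauge-invariant uniqueness argument. For each $g\in G$, the family $(\beta^p_g)_{p\in P}$ assembles into a map $\beta_g:\Ee\to\Ee$ that is linear on fibers, a $*$-automorphism on $\Ee_e=A$, isometric for the inner products, and multiplicative with respect to the $\Mm_{p,q}$; hence $j_\Ee\circ\beta_g:\Ee\to\Oo_A(\Ee)$ is a Toeplitz representation. Because $\beta_g$ intertwines the left action in the sense that $\phi_p(\alpha_g(a))=\beta^p_g\phi_p(a)(\beta^p_g)^{-1}$, the induced map on compacts satisfies $(j_\Ee\circ\beta_g)^{(p)}(\phi_p(a))=j_\Ee(\alpha_g(a))$ whenever $\phi_p(a)\in\Kk(\Ee_p)$, so the representation is Cuntz--Pimsner covariant. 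Universality then produces an automorphism $\gamma_g$ of $\Oo_A(\Ee)$, and strong continuity of $g\mapsto\gamma_g$ follows from continuity of each $\beta^p$ together with the fact that $\Oo_A(\Ee)$ is generated by $j_\Ee(\Ee)$.

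Next, I would verify that $\Ee\rtimes_\beta G$ is again row-finite and faithful when $G$ is amenable. The standard identification $\Kk(\Ee_p\rtimes_{\beta^p}G)\cong\Kk(\Ee_p)\rtimes_{\beta^p}G$ for row-finite correspondences, combined with naturality of the crossed product, shows that the left action of $A\rtimes_\alpha G$ on $\Ee_p\rtimes_{\beta^p}G$ takes values in the compacts. Faithfulness of $\phi_p\rtimes G$ is where amenability enters: the injection $\phi_p:A\hookrightarrow\Ll(\Ee_p)$ descends to an injection of full crossed products precisely when full and reduced crossed products coincide, which is guaranteed by amenability.

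For the isomorphism I would apply the gauge-invariant uniqueness theorem for Cuntz--Pimsner algebras of row-finite and faithful product systems over $\NN^k$. Define a map $\psi:\Ee\rtimes_\beta G\to\Oo_A(\Ee)\rtimes_\gamma G$ fiberwise by $\psi_p(\zeta)=\int_G j_\Ee(\zeta(t))\,u_t\,dt$ for $\zeta\in C_c(G,\Ee_p)$, where $u:G\to\Mm(\Oo_A(\Ee)\rtimes_\gamma G)$ is the canonical unitary representation implementing $\gamma$. Using the covariance relation $u_g^*\, j_\Ee(x)\,u_g=j_\Ee(\beta^p_g(x))$ for $x\in\Ee_p$ and the multiplication rule of Definition \ref{cp}, one checks directly that $\psi$ is a Toeplitz representation; Cuntz--Pimsner covariance follows from the previous paragraph and the identification $\Kk(\Ee_p\rtimes_{\beta^p}G)\cong\Kk(\Ee_p)\rtimes_{\beta^p}G$. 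The induced $*$-homomorphism $\Psi:\Oo_{A\rtimes_\alpha G}(\Ee\rtimes_\beta G)\to\Oo_A(\Ee)\rtimes_\gamma G$ is $\TT^k$-equivariant for the natural gauge actions (the gauge on the target being inherited from $\Oo_A(\Ee)$, since $\gamma_g$ commutes with the gauge action) and it is surjective because its image contains the generators $j_\Ee(\Ee)$ and the unitaries $u_g$. Injectivity of $\Psi$ on the coefficient algebra $A\rtimes_\alpha G$ is automatic, so the gauge-invariant uniqueness theorem upgrades $\Psi$ to an isomorphism.

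The main obstacle I anticipate is the Cuntz--Pimsner covariance of $\psi$, because it requires identifying $\psi^{(p)}$ with an integrated form of $j_\Ee^{(p)}$ and pushing the isomorphism $\Kk(\Ee_p\rtimes_{\beta^p}G)\cong\Kk(\Ee_p)\rtimes_{\beta^p}G$ through the universal construction. This is precisely the point at which amenability is indispensable, since it guarantees that the full crossed product of the left action does not lose faithfulness and that the natural map from the full crossed product of the compacts to the compacts of the crossed-product module is an isomorphism.
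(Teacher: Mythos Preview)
The paper does not actually prove this proposition: it is stated without proof and attributed to \cite{DHS} (work in progress) and \cite{Ka}. So there is no ``paper's own proof'' to compare against, and your task was in effect to reconstruct the argument from scratch.

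Your outline is the expected one and is essentially correct. Constructing $\gamma_g$ via universality, checking that the crossed-product product system inherits row-finiteness and faithfulness (using $\Kk(\Ee_p\rtimes_{\beta^p}G)\cong\Kk(\Ee_p)\rtimes G$ and amenability for the latter), and then applying a gauge-invariant uniqueness theorem to the induced map $\Psi$ is precisely the strategy one finds in \cite{HN} for a single correspondence and in \cite{Ka,DHS} for product systems. A couple of small points deserve a sentence each in a full write-up. First, your claim that injectivity of $\Psi$ on $A\rtimes_\alpha G$ is ``automatic'' is correct but not content-free: it uses that $j_\Ee|_A$ is injective (which follows from faithfulness of $\Ee$) together with amenability to ensure that the functor $-\rtimes G$ preserves this injection. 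Second, for surjectivity you should note that when $A$ is non-unital the unitaries $u_g$ live only in the multiplier algebra, so one argues instead that the image of $\Psi$ contains all products $j_\Ee(x)u_g$, which suffices to generate $\Oo_A(\Ee)\rtimes_\gamma G$. The point you flag as the main obstacle (identifying $\psi^{(p)}$ with an integrated form of $j_\Ee^{(p)}$) is indeed the technical heart, and your diagnosis that it reduces to the isomorphism $\Kk(\Ee_p\rtimes G)\cong\Kk(\Ee_p)\rtimes G$ is right.
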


Now we define the product system associated to $k$ representations of a compact group $G$. We limit ourselves to finite dimensional unitary representations, even though the definition makes sense in greater generality.
 
\begin{dfn}\label{ps}
Given a compact group $G$ and $k$ finite dimensional unitary representations $\rho_i$ of $G$   on Hilbert spaces $\mathcal H_i$ for $i=1,...,k$, we construct the product system $\Ee=\Ee(\rho_1,...,\rho_k)$ indexed by the commutative monoid $(\mathbb N^k,+)$, with fibers   \[\mathcal E_n=\Hh^n=\mathcal H_1^{\otimes n_1}\otimes\cdots\otimes  \mathcal H_k^{\otimes n_k}\] 
for $n=(n_1,...,n_k)\in \NN^k$, in particular, $A=\mathcal E_0=\mathbb C$. The multiplication maps $\Mm_{n,m}:\Ee_n\times \Ee_m\to \Ee_{n+m}$ in  $\Ee$ are defined using repeatedly the standard isomorphisms $\rho_i\otimes\rho_j\cong \rho_j\otimes \rho_i$ for all $i<j$. The associativity in $\Ee$ follows from the fact that 
\[\Mm_{n+m,p}\circ (\Mm_{n,m}\times id)=\Mm_{n,m+p}\circ ( id\times \Mm_{m,p})\] as maps from $\Ee_n\times \Ee_m\times \Ee_p$ to $\Ee_{n+m+p}.$
 Then $\Ee=\Ee(\rho_1,...,\rho_k)$ is called the product system of the representations $\rho_1,...,\rho_k$.
\end{dfn}
\begin{rmk}
Similarly, a semigroup $P$ of  unitary representations of a group $G$ would determine a product system $\Ee\to P$.
\end{rmk}
\begin{prop}
With notation as in  Definition \ref{ps}, assume $d_i=\dim \Hh_i\ge 2$.  Then the Cuntz-Pimsner algebra $\Oo(\Ee)$ associated to the product system $\Ee\to \NN^k$ described above is isomorphic with the $C^*$-algebra of a rank $k$ graph $\Gamma$ with a single vertex and with $|\Gamma^{\ve_i}|=d_i$. This isomorphism is equivariant for the gauge action. Moreover, \[\Oo(\Ee)\cong \mathcal O_{d_1}\otimes\cdots\otimes \mathcal O_{d_k},\]  where $\mathcal O_n$ is the Cuntz algebra. 
 \end{prop}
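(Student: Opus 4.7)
The plan is to construct an explicit rank $k$ graph $\Gamma$ whose associated product system is canonically isomorphic to $\Ee$ and whose $C^*$-algebra is visibly a tensor product of Cuntz algebras, then chain the two identifications.

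First I would fix an orthonormal basis $\{e^i_1,\dots,e^i_{d_i}\}$ of $\mathcal H_i$ for each $i=1,\dots,k$, and define $\Gamma$ to have a single vertex $v$, edge set $\Gamma^{\ve_i}=\{e^i_1,\dots,e^i_{d_i}\}$, and factorization bijections $\lambda_{ij}(e^i_a,e^j_b)=(e^j_b,e^i_a)$ coming from the canonical flip $\mathcal H_i\otimes \mathcal H_j\to \mathcal H_j\otimes \mathcal H_i$, $e^i_a\otimes e^j_b\mapsto e^j_b\otimes e^i_a$. The hexagonal associativity condition for $(\lambda_{ij})$ is automatic, because each $\lambda_{ij}$ is the transposition of two tensor factors and the symmetric group is generated by adjacent transpositions subject precisely to the braid/hexagon relations. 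Thus $\Gamma$ is a bona fide row-finite rank $k$ graph with no sources, with $|\Gamma^{\ve_i}|=d_i$ as required.

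Next I would identify $\Ee$ with the product system $\Ee^\Gamma$ of $\Gamma$ described in Example 2.3. Every path in $\Gamma^n$ factors uniquely as $n_1$ edges of color $1$, followed by $n_2$ edges of color $2$, \ldots, followed by $n_k$ edges of color $k$, and sending such a color-ordered path to the corresponding tensor product of basis vectors defines a unitary $U_n\colon \Ee^\Gamma_n\to \Hh^n$. To see that $U=(U_n)_n$ is a morphism of product systems, one compares the two multiplications on basis elements: concatenation $\Ee^\Gamma_n\times \Ee^\Gamma_m\to \Ee^\Gamma_{n+m}$ followed by reduction to color-ordered form via the $\lambda_{ij}$'s is precisely the permutation of tensor factors that the multiplication $\Mm_{n,m}$ in $\Ee$ applies via repeated use of the flips $\mathcal H_i\otimes \mathcal H_j\cong \mathcal H_j\otimes \mathcal H_i$. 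Because both gauge actions scale each fiber by $z^n$, the isomorphism $U$ is $\TT^k$-equivariant, so Example 2.3 gives a gauge-equivariant isomorphism $\Oo(\Ee)\cong C^*(\Gamma)$.

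Finally I would recognize $C^*(\Gamma)$ as $\Oo_{d_1}\otimes \cdots \otimes \Oo_{d_k}$. Writing $S^i_a$ for the partial isometry associated to the edge $e^i_a$, the Cuntz-Krieger relation at the single vertex reads $\sum_{a=1}^{d_i}S^i_a(S^i_a)^*=1$ for every $i$, so for each $i$ the family $\{S^i_a\}_{a}$ generates a copy of $\Oo_{d_i}$ inside $C^*(\Gamma)$. The factorization rules force $S^i_aS^j_b=S^j_bS^i_a$ for $i\ne j$, so these $k$ copies commute pairwise. By nuclearity of $\Oo_{d_i}$ and the universal property of the Cuntz algebra, we obtain a surjective $*$-homomorphism $\pi\colon \Oo_{d_1}\otimes \cdots \otimes \Oo_{d_k}\to C^*(\Gamma)$; since the domain is simple and $\pi$ is nonzero, $\pi$ is an isomorphism, which combined with the previous step gives the proposition. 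The main obstacle in the argument is the middle step: verifying that $U$ really is a morphism of product systems, i.e.\ that path concatenation followed by $\lambda$-reordering genuinely matches the flip-based multiplication in $\Ee$. Once orthonormal bases are fixed this becomes a direct check on basis elements, but it is the combinatorial core of the proof.
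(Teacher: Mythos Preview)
Your argument is correct and follows the same route as the paper: choose bases to get edges, use the canonical flips $\mathcal H_i\otimes\mathcal H_j\cong\mathcal H_j\otimes\mathcal H_i$ to impose the trivial factorization rules $ef=fe$, observe that associativity is automatic, and then identify $C^*(\Gamma)$ with $\mathcal O_{d_1}\otimes\cdots\otimes\mathcal O_{d_k}$ via the commuting Cuntz families. The paper's proof is a three-sentence sketch of exactly this; your version simply fills in the details the paper omits, most notably the explicit product-system isomorphism $U\colon\Ee^\Gamma\to\Ee$ (which the paper leaves implicit) and the simplicity argument for injectivity of the tensor-product map (which the paper just asserts). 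One small correction: the $k$-graph product system and the equivariant isomorphism $\Oo_A(\Ee^\Lambda)\cong C^*(\Lambda)$ appear in Example~2.4, not Example~2.3.
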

 \begin{proof}
Indeed, by choosing a basis in each $\Hh_i$, we get the edges $\Gamma^{\ve_i}$ in a $k$-coloured graph $\Gamma$ with a single vertex. The isomorphisms $\rho_i\otimes\rho_j\cong \rho_j\otimes \rho_i$ determine the factorization rules of the form $ef=fe$ for $e\in \Gamma^{\ve_i}$ and $f\in \Gamma^{\ve_j}$ which obviously satisfy the associativity condition. In particular,  the corresponding isometries in $C^*(\Gamma)$ commute and $\Oo(\Ee)\cong C^*(\Gamma)\cong\mathcal O_{d_1}\otimes\cdots\otimes \mathcal O_{d_k}$, preserving the gauge action. 
\end{proof}

\begin{rmk}
For $d_i\ge 2$, the $C^*$-algebra $\Oo(\Ee)\cong C^*(\Gamma)$ is always simple and purely infinite since it is a  tensor product of simple and purely infinite $C^*$-algebras. If $d_i=1$ for some $i$, then $\Oo(\Ee)$ will contain a copy of $C(\TT)$, so it is not simple. Of course, if $d_i=1$ for all $i$, then $\Oo(\Ee)\cong C(\TT^k)$.
%If $k=2$ and $\rho_1=\rho_2$, it may happen that $C^*(\Gamma)\cong \Oo_{d_1}\otimes C(\TT)$. 
For more on single vertex rank $k$ graphs, see \cite{DY, DY1}. 
\end{rmk}
\begin{prop}
The compact group $G$ acts on each fiber $\Ee_n$ of the  product system $\mathcal E$ via the representation $\rho^n=\rho_1^{\otimes n_1}\otimes\cdots\otimes \rho_k^{\otimes n_k}$. This action is compatible with the multiplication maps and commutes with the gauge action of $\TT^k$.  The crossed product $\mathcal E\rtimes G$ becomes a row-finite and faithful product system indexed by $\mathbb N^k$ over the group $C^*$-algebra $C^*(G)$. Moreover, 
\[\Oo(\Ee) \rtimes G \cong \Oo_{C^*(G)}(\Ee \rtimes G).\]
\end{prop}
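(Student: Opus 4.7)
The plan is to check in turn each clause of the proposition, with the substantive isomorphism deferred to Proposition \ref{p2}.

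First I would define, for each $n\in\NN^k$ and $g\in G$, the map $\beta^n_g:=\rho^n_g:\Ee_n\to\Ee_n$, and on the identity fiber $A=\Ee_0=\CC$ take $\alpha=\beta^0$ to be the trivial action. The strong continuity of each $\rho_i$ (automatic since $\dim\Hh_i<\infty$) lifts to strong continuity of each $\rho^n$. The main thing to verify is compatibility with the multiplication maps $\Mm_{n,m}$. By Definition \ref{ps}, the identification $\Hh^n\otimes\Hh^m\cong\Hh^{n+m}$ used to build $\Mm_{n,m}$ is a composition of factor-swap isomorphisms $\Hh_j\otimes\Hh_i\to\Hh_i\otimes\Hh_j$ with $i<j$; each such flip $v\otimes w\mapsto w\otimes v$ obviously intertwines $\rho_j\otimes\rho_i$ with $\rho_i\otimes\rho_j$. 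Composing, the shuffle intertwines $\rho^n\otimes\rho^m$ with $\rho^{n+m}$, which is precisely the required identity $\beta^{n+m}_g\circ\Mm_{n,m}=\Mm_{n,m}\circ(\beta^n_g\otimes\beta^m_g)$.

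Next, commutativity with the gauge action is immediate: by the example preceding this proposition, $\gamma_z$ acts on $\Ee_n$ by the scalar $z^n$, and scalars commute with the linear map $\rho^n_g$. Hence the induced automorphism groups $(\gamma_z)_{z\in\TT^k}$ and $(\beta_g)_{g\in G}$ on $\Oo(\Ee)$ commute.

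Third, I would check the hypotheses of Proposition \ref{p2}. Since $A=\CC$ and each $\Ee_n$ is a finite dimensional Hilbert space, $\Kk(\Ee_n)=\Ll(\Ee_n)$ and $\phi_n:\CC\to\Ll(\Ee_n)$ is the (injective, compact-valued) scalar embedding; so $\Ee$ is row-finite and faithful. The compact group $G$ is amenable, so Proposition \ref{p1} gives that $\Ee\rtimes_\beta G$ is a product system over $\CC\rtimes_\alpha G=C^*(G)$, and Proposition \ref{p2} yields both that $\Ee\rtimes_\beta G$ is row-finite and faithful and that
\[
\Oo(\Ee)\rtimes_\gamma G\;\cong\;\Oo_{C^*(G)}(\Ee\rtimes_\beta G),
\]
which is the desired conclusion.

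The only step requiring real care is the compatibility of $\beta$ with the multiplication maps, since $\Mm_{n,m}$ is defined via an iterated shuffle through the canonical commutativity isomorphisms; however this reduces to the naturality of the flip $\Hh_j\otimes\Hh_i\cong\Hh_i\otimes\Hh_j$ as a $G$-intertwiner, which is automatic. Everything else is bookkeeping plus a direct appeal to Proposition \ref{p2}.
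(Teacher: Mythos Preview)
Your proof is correct and follows essentially the same approach as the paper: define the action fiberwise by $\rho^n$, verify compatibility with the multiplication via the fact that the flip isomorphisms $\Hh_j\otimes\Hh_i\cong\Hh_i\otimes\Hh_j$ are $G$-intertwiners, check gauge-commutativity by the scalar argument, and then invoke Propositions~\ref{p1} and~\ref{p2}. Your version is simply more explicit about the hypotheses (row-finiteness, faithfulness, amenability of the compact group $G$) that feed into Proposition~\ref{p2}.
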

\begin{proof}
Indeed, for $g\in G$ and $\xi\in \Ee_n=\Hh^n$ we define $g\cdot\xi=\rho^n(\xi)$ and since $\rho_i\otimes \rho_j\cong \rho_j\otimes \rho_i$, we have $g\cdot(\xi\otimes \eta)=g\cdot\xi\otimes g\cdot \eta$ for $\xi\in \Ee_n, \eta\in\Ee_m$.  Clearly,
\[g\cdot\gamma_z(\xi)=g\cdot(z^n\xi)=z^n(g\cdot\xi)=\gamma_z(g\cdot\xi),\]
so the action of $G$ commutes with the gauge action. Using Proposition \ref{p1}, $\mathcal E\rtimes G$ becomes a product system indexed by $\mathbb N^k$ over $C^*(G)\cong \CC\rtimes G$ with fibers $\Ee_n\rtimes G$. The isomorphism $\Oo(\Ee) \rtimes G \cong \Oo_{C^*(G)}(\Ee \rtimes G)$ follows  from Proposition \ref{p2}.

\end{proof}
\begin{cor}
Since the action of $G$ commutes with the gauge action, the group $G$ acts on the core algebra $\Ff=\Oo(\Ee)^{\TT^k}$.
\end{cor}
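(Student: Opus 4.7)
The plan is to use the standard fact that when two group actions on a $C^\ast$-algebra commute, each action preserves the fixed-point subalgebra of the other. Concretely, the preceding proposition already gives us an action $\gamma:G\to \mathrm{Aut}(\Oo(\Ee))$ (obtained from the fiberwise action via the universal property of $\Oo(\Ee)$) together with the fact that $\gamma_g\circ\delta_z=\delta_z\circ \gamma_g$ for all $g\in G$ and $z\in \TT^k$, where $\delta$ denotes the gauge action. The corollary will follow from a short verification that $\gamma$ restricts well to $\Ff=\Oo(\Ee)^{\TT^k}$.

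The key step is this: if $x\in\Ff$, i.e.\ $\delta_z(x)=x$ for every $z\in \TT^k$, then for each $g\in G$,
\[
\delta_z(\gamma_g(x))=\gamma_g(\delta_z(x))=\gamma_g(x),
\]
so $\gamma_g(x)\in\Ff$. Hence each $\gamma_g$ restricts to a $\ast$-automorphism of $\Ff$, which I will denote by the same symbol. Functoriality (i.e.\ $\gamma_{gh}=\gamma_g\gamma_h$ and $\gamma_e=\mathrm{id}$) is inherited from the action on $\Oo(\Ee)$.

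The only remaining point is continuity. Since $\gamma:G\to \mathrm{Aut}(\Oo(\Ee))$ is strongly continuous, for every $x\in \Ff\subset \Oo(\Ee)$ the map $g\mapsto \gamma_g(x)$ is norm-continuous into $\Oo(\Ee)$, and its image lies in $\Ff$ by the previous paragraph; therefore it is continuous into $\Ff$ as well. This gives the desired action of $G$ on the core algebra.

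I do not expect any obstacles: the corollary is a direct consequence of the commutation established in the preceding proposition, together with the elementary observation that commuting actions preserve each other's fixed-point subalgebras. No additional structure of $\Oo(\Ee)$ or of the product system $\Ee$ is needed.
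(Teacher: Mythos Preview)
Your argument is correct and is exactly the standard one the paper has in mind: the corollary is stated without proof in the paper because it follows immediately from the commutation of the $G$-action with the gauge action, precisely via the computation $\delta_z(\gamma_g(x))=\gamma_g(\delta_z(x))=\gamma_g(x)$ that you wrote down. There is nothing to add.
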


\bigskip
%-------------------------------------
\section{The Doplicher-Roberts algebra}

\bigskip

The Doplicher-Roberts algebras $\Oo_\rho$, denoted  by ${\mathcal O}_G$ in \cite{DR1}, were introduced to construct a new duality theory for compact Lie groups $G$ which strengthens the Tannaka-Krein duality. Here $\rho$ is the  $n$-dimensional representation  of $G$ defined by the inclusion $G\subseteq U(n)$ in some unitary group $U(n)$. Let ${\mathcal T}_G$ denote the representation category whose objects are tensor powers $\rho^p=\rho^{\otimes p}$ for $p\ge 0$, and whose arrows are the intertwiners $Hom(\rho^p, \rho^q)$. The group $G$ acts via $\rho$ on the Cuntz algebra ${\mathcal O}_n$ and ${\mathcal O}_G={\mathcal O}_\rho$ is identified in \cite{DR1} with the fixed point algebra ${\mathcal O}_n^G$. If $\sigma$ denotes the restriction to ${\mathcal O}_\rho$ of the canonical endomorphism of $\Oo_n$, then ${\mathcal T}_G$ can be reconstructed from the pair $({\mathcal O}_\rho,\sigma)$. Subsequently, Doplicher-Roberts algebras were associated to any object $\rho$ in a strict tensor $C^*$-category, see \cite {DR2}.

Given  finite dimensional unitary representations $\rho_1, ...,\rho_k$ of a compact group $G$ on Hilbert spaces $\mathcal H_1, ...,  \mathcal H_k$ we will construct a Doplicher-Roberts algebra $\mathcal O_{\rho_1,...,\rho_k}$  from intertwiners 
\[Hom (\rho^n, \rho^m)=\{T\in\Ll({\mathcal H}^n, {\mathcal H}^m)\;\mid \;  T\rho^n(g)=\rho^m(g)T\;\;\forall\;g\in G\},\] 
where for $n=(n_1,...,n_k)\in \mathbb N^k$ the representation $\rho^n=\rho_1^{\otimes n_1}\otimes\cdots\otimes \rho_k^{\otimes n_k}$ acts on $\Hh^n=\mathcal H_1^{\otimes n_1}\otimes \cdots \otimes \mathcal H_k^{\otimes n_k}$. Note that $\rho^0=\iota$ is the trivial representation of $G$,   acting on $\Hh^0=\CC$. This Doplicher-Roberts algebra will be a subalgebra of $\Oo(\Ee)$ for the product system $\Ee$ as in Definition \ref{ps}.

 \begin{lem}\label{cat}
 Consider \[ \Aa_0=\bigcup_{m,n\in \NN^k}\Ll(\mathcal H^n,\mathcal H^m).\] Then the linear span of $\Aa_0$ becomes a $*$-algebra $\Aa$ with appropriate multiplication and involution. This algebra has a natural $\ZZ^k$-grading coming from a gauge action of $\TT^k$. Moreover, the Cuntz-Pimsner algebra
 $\Oo(\Ee)$ of the product system $\Ee=\Ee(\rho_1,...,\rho_k)$ is equivariantly isomorphic to  the $C^*$-closure of $\Aa$ in the unique $C^*$-norm for which the gauge action is isometric.
 \end{lem}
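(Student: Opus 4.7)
The plan is to generalize the construction of Section 7 of \cite{KPRR} from $k=1$ to arbitrary $k$, realizing $\Aa_0$ inside $\Oo(\Ee)$ via rank-one operators. Since each $\Hh^n$ is finite-dimensional, $\Ll(\Hh^n, \Hh^m) = \Kk(\Hh^n, \Hh^m)$ is linearly spanned by operators $\theta_{\xi,\eta}$ with $\xi \in \Hh^m$ and $\eta \in \Hh^n$, and sending $\theta_{\xi,\eta}$ to $j_\Ee(\xi) j_\Ee(\eta)^*$ extends to a linear map $\iota : \Aa \to \Oo(\Ee)$. All structure on $\Aa$ will be set up so that $\iota$ becomes a $*$-algebra homomorphism.

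First I would define the $*$-algebra structure on $\Aa$. Involution is the Hilbert space adjoint: $T \in \Ll(\Hh^n, \Hh^m)$ maps to $T^* \in \Ll(\Hh^m, \Hh^n)$, which clearly corresponds to $\iota(T)^*$ in $\Oo(\Ee)$. For multiplication, given $T \in \Ll(\Hh^n, \Hh^m)$ and $S \in \Ll(\Hh^p, \Hh^q)$, pick $s = n \vee p$ (componentwise maximum in $\NN^k$) and use the Cuntz--Pimsner covariance identity $\sum_\alpha j_\Ee(e_\alpha) j_\Ee(e_\alpha)^* = 1$ --- which holds for any ONB $\{e_\alpha\}$ of $\Hh^r$ since $\phi_r(1) = \operatorname{id} \in \Kk(\Ee_r)$ --- to rewrite $T$ and $S$ as operators with common inner index $s$ (by tensoring with $1_{\Hh^{s-n}}$ and $1_{\Hh^{s-p}}$ respectively). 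The product is then an ordinary composition of operators landing in $\Ll(\Hh^{q+s-p}, \Hh^{m+s-n}) \subset \Aa_0$. Associativity and $*$-compatibility follow by transport from $\Oo(\Ee)$. The $\ZZ^k$-grading is $\deg(\Ll(\Hh^n,\Hh^m)) = m - n$, and the corresponding gauge action $\gamma_z(T) = z^{m-n} T$ matches, via $\iota$, the gauge action on $\Oo(\Ee)$ since $\gamma_z(j_\Ee(\xi) j_\Ee(\eta)^*) = z^{m-n} j_\Ee(\xi) j_\Ee(\eta)^*$ for $\xi \in \Ee_m$, $\eta \in \Ee_n$. Density of $\iota(\Aa)$ in $\Oo(\Ee)$ is immediate since $\Oo(\Ee)$ is generated by $\{j_\Ee(\xi)\}$ and $j_\Ee(\xi) = \iota(\theta_{\xi,1})$ with $1 \in \Ee_0 = \CC$.

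The crux is uniqueness of the $C^*$-norm. The degree-$0$ part $\Aa^{(0)} = \bigcup_n \Ll(\Hh^n)$, with embeddings $\Ll(\Hh^n) \hookrightarrow \Ll(\Hh^{n+r})$ given by $T \mapsto T \otimes 1_{\Hh^r}$ (a reformulation of Cuntz--Pimsner covariance), is an increasing union of matrix algebras, hence AF with a unique $C^*$-norm. On any $C^*$-completion of $\Aa$ for which $\gamma$ is isometric, the formula $E(x) = \int_{\TT^k} \gamma_z(x) \, dz$ defines a faithful conditional expectation onto the closure of $\Aa^{(0)}$, and then $\|x\|^2 = \|E(x^*x)\|$ forces the ambient norm. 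Combining this with the gauge-invariant uniqueness theorem for Cuntz--Pimsner algebras (available since $\Ee$ is row-finite and faithful) shows that $\iota$ extends to an equivariant $*$-isomorphism between this unique completion of $\Aa$ and $\Oo(\Ee)$. The main obstacle is verifying that the ad hoc multiplication on $\Aa$ is well-defined independently of the choices made to equalize indices, and that it is associative; both points boil down to repeated applications of the covariance identity $\sum_\alpha j_\Ee(e_\alpha) j_\Ee(e_\alpha)^* = 1$ and are cleanest to check by transporting the computation into $\Oo(\Ee)$.
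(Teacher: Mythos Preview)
Your strategy is correct and runs parallel to the paper's: both define multiplication in $\Aa$ by stabilizing $S\in\Ll(\Hh^n,\Hh^m)$ and $T\in\Ll(\Hh^q,\Hh^p)$ to the common inner index $n\vee p$ via $T\mapsto T\otimes I_r$ and then composing, both take Hilbert-space adjoints for the involution, and both invoke the $\ZZ^k$-grading for norm uniqueness. The paper cites Theorem~4.2 of \cite{DR2} for the uniqueness step and then builds the isomorphism with $\Oo(\Ee)$ through the earlier identification $\Oo(\Ee)\cong\Oo_{d_1}\otimes\cdots\otimes\Oo_{d_k}$, sending an elementary tensor $(T_1,\ldots,T_k)$ with $T_i\in\Ll(\Hh_i^{n_i},\Hh_i^{m_i})$ to $T_1\otimes\cdots\otimes T_k\in\Ll(\Hh^n,\Hh^m)$. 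You instead embed $\Aa$ directly into $\Oo(\Ee)$ by $\theta_{\xi,\eta}\mapsto j_\Ee(\xi)j_\Ee(\eta)^*$ and close the loop with gauge-invariant uniqueness; this avoids the tensor-product detour and makes the role of Cuntz--Pimsner covariance explicit.

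One step, however, is wrong as written. The formula $\|x\|^2=\|E(x^*x)\|$ is false in general: in $M_2(\CC)$ with $E$ the expectation onto the diagonal and $x$ the matrix with first row $(1,1)$ and second row zero, one has $\|x\|^2=2$ while $\|E(x^*x)\|=1$. Faithfulness of $E$ does \emph{not} yield that equality, so it cannot by itself ``force the ambient norm.'' The argument you want is this: any $C^*$-seminorm on $\Aa$ is bounded on each $\Ll(\Hh^n,\Hh^m)$ by the operator norm (since $T^*T$ lies in the simple algebra $\Ll(\Hh^n)$), so the supremum over all such seminorms is finite and defines a maximal $C^*$-completion $A_{\max}$ carrying an isometric gauge action. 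For any other such completion $A$ there is a gauge-equivariant surjection $\pi:A_{\max}\to A$; it is injective on the degree-zero part by uniqueness of the AF norm on $\Aa^{(0)}$, and since $\pi$ intertwines the faithful averaging expectations one gets $\pi(x)=0\Rightarrow \pi(E_{\max}(x^*x))=0\Rightarrow E_{\max}(x^*x)=0\Rightarrow x=0$. Equivalently, invoke amenability of $\ZZ^k$ and the coincidence of full and reduced cross-sectional algebras of the associated Fell bundle. This is exactly what the Doplicher--Roberts theorem cited by the paper packages; once you replace your incorrect formula with this argument, your proof goes through.
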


\begin{proof}
Recall  that the Cuntz algebra $\Oo_n$ contains a canonical Hilbert space $\Hh$ of dimension $n$ and it can be constructed as the closure of  the linear span of $\ds \bigcup_{p,q\in \NN}\Ll(\mathcal H^p,\mathcal H^q)$ using embeddings \[\Ll(\Hh^p,\Hh^q)\subseteq \Ll(\Hh^{ p+1},\Hh^{ q+1}),\;\;T\mapsto T\otimes I\] where $\Hh^p=\mathcal H^{\otimes p}$ and $I:\Hh\to\Hh$ is the identity map. This linear span becomes a $*$-algebra with a multiplication given by composition and an involution (see \cite{DR1} and  Proposition 2.5 in \cite{KPW}). 

Similarly, for all $r\in \NN^k$, we consider embeddings $\Ll(\Hh^n,\Hh^m)\subseteq \Ll(\Hh^{n+r},\Hh^{m+r})$ given by $T\mapsto T\otimes I_r$, where $I_r:{\mathcal H}^r\to {\mathcal H}^r$ is the identity map,  
 and endow  $\Aa$ with a multiplication given by composition and an involution. More precisely, if $S\in \Ll(\Hh^n,\Hh^m)$ and $T\in \Ll(\Hh^q,\Hh^p)$, then the product $ST$ is
 \[(S\otimes I_{p\vee n-n})\circ (T\otimes I_{p\vee n-p})\in \Ll(\Hh^{q+p\vee n-p},\Hh^{m+p\vee n-n}),\]
where  we write $p\vee n$ for the coordinatewise maximum. This multiplication is well defined in $\Aa$ and is associative.
The adjoint of $T\in \Ll(\Hh^n,\Hh^m)$ is $T^*\in \Ll(\Hh^m,\Hh^n)$.

There is a natural $\mathbb Z^k$-grading on $\Aa$ given by the gauge action $\gamma$ of $\TT^k$, where for $z=(z_1,...,z_k)\in \TT^k$ and $T\in \Ll(\Hh^n,\Hh^m)$ we define \[\gamma_z(T)(\xi)=z_1^{m_1-n_1}\cdots z_k^{m_k-n_k}T(\xi).\]
Adapting the argument  in Theorem 4.2 in \cite{DR2} for $\ZZ^k$-graded $C^*$-algebras, the $C^*$-closure of $\Aa$ in the unique $C^*$-norm for which $\gamma_z$ is isometric is well defined.  The map
\[(T_1,...,T_k)\mapsto  T_1\otimes\cdots\otimes T_k, \]
where 
 \[T_1\otimes\cdots\otimes T_k: \mathcal H^n\to \mathcal H^m,\; (T_1\otimes\cdots\otimes T_k)(\xi_1\otimes\cdots\otimes \xi_k)=T_1(\xi_1)\otimes\cdots\otimes T_k(\xi_k)\] for  $T_i\in  \Ll(\mathcal H_i^{n_i},\mathcal H_i^{m_i})$ for $i=1,...,k$ preserves the gauge action and it can be extended to an equivariant isomorphism from $\Oo(\Ee)\cong \Oo_{d_1}\otimes\cdots\otimes\Oo_{d_k}$ to the $C^*$-closure of $\Aa$. Note that the closure  of $\ds \bigcup_{n\in \NN^k}\Ll(\mathcal H^n,\mathcal H^n)$ is isomorphic to the core $\Ff=\Oo(\Ee)^{\TT^k}$, the fixed point algebra under the gauge action, which is a UHF-algebra.
\end{proof}

To define the Doplicher-Roberts algebra $\mathcal O_{\rho_1,...,\rho_k}$, we will again identify $Hom(\rho^n,\rho^m)$ with a subset of $Hom(\rho^{n+r},\rho^{m+r})$ for each $r\in \mathbb N^k$, via $T\mapsto T\otimes I_r$. 
After this identification, it follows that the linear span ${}^0{\mathcal O}_{\rho_1, ..., \rho_k}$ of $\ds \bigcup_{m,n\in\NN^k}Hom(\rho^n, \rho^m)\subseteq \Aa_0$ has a natural multiplication and involution inherited from $\Aa$. Indeed, a computation shows that if $S\in Hom(\rho^n, \rho^m)$ and $T\in Hom(\rho^q,\rho^p)$, then $S^*\in Hom(\rho^m, \rho^n)$ and 
\[(S\otimes I_{p\vee n-n})\circ (T\otimes I_{p\vee n-p})\rho^{q+p\vee n-p}(g)=\]\[=\rho^{m+p\vee n-n}(g)(S\otimes I_{p\vee n-n})\circ (T\otimes I_{p\vee n-p}),\]
so $(S\otimes I_{p\vee n-n})\circ (T\otimes I_{p\vee n-p})\in Hom(\rho^{q+p\vee n-p}, \rho^{m+p\vee n-n})$ and ${}^0{\mathcal O}_{\rho_1, ..., \rho_k}$ is closed under these operations.  Since the action of $G$ commutes with the gauge action, there is a natural $\mathbb Z^k$-grading of ${}^0{\mathcal O}_{\rho_1,...,\rho_k}$ given by the gauge action $\gamma$ of $\TT^k$ on  $\Aa$.

It follows that the closure ${\mathcal O}_{\rho_1,..., \rho_k}$ of ${}^0{\mathcal O}_{\rho_1, ...,\rho_k}$ in $\Oo(\Ee)$  is well defined, obtaining the Doplicher-Roberts algebra  associated to the 
representations $\rho_1, ...,\rho_k$. This $C^*$-algebra also has a $\mathbb Z^k$-grading and a gauge action of $\TT^k$. By construction, ${\mathcal O}_{\rho_1,..., \rho_k}\subseteq \Oo(\Ee)$.
\begin{rmk}
For a compact Lie group $G$, our Doplicher-Roberts algebra ${\mathcal O}_{\rho_1,..., \rho_k}$ is Morita equivalent with the higher rank Doplicher-Roberts algebra $\Dd$ in \cite{AM}. It is also the section $C^*$-algebra of a Fell bundle over $\ZZ^k$.
\end{rmk}
\begin{thm}
Let $\rho_i$ be  finite dimensional unitary representations  of a compact group $G$ on Hilbert spaces $\mathcal H_i$ of dimensions $d_i\ge 2$ for $i=1,...,k$. Then the Doplicher-Roberts algebra ${\mathcal O}_{\rho_1,...,\rho_k}$    is isomorphic to the fixed point algebra ${\mathcal O}(\Ee)^G\cong (\mathcal O_{d_1}\otimes\cdots\otimes \mathcal O_{d_k})^G$, where $\Ee=\Ee(\rho_1,...,\rho_k)$ is the product system described in Definition \ref{ps}.
\end{thm}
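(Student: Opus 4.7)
The plan is to compare, at the algebraic level, the dense $*$-subalgebra $\mathcal{A}$ of $\mathcal{O}(\mathcal{E})$ produced by Lemma \ref{cat} with the dense $*$-subalgebra ${}^0\mathcal{O}_{\rho_1,\dots,\rho_k}$ of $\mathcal{O}_{\rho_1,\dots,\rho_k}$, and then to promote the identification to their $C^*$-closures using the conditional expectation furnished by Haar averaging. The key observation is that, under the equivariant identification of $\mathcal{O}(\mathcal{E})$ with the closure of $\mathcal{A}$, the action of $G$ on a homogeneous element $T\in\mathcal{L}(\mathcal{H}^n,\mathcal{H}^m)\subseteq\mathcal{A}_0$ is given by conjugation: $g\cdot T=\rho^m(g)\,T\,\rho^n(g)^*$. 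This follows by unwinding the definition of the $G$-action on each fiber $\mathcal{E}_n=\mathcal{H}^n$ (which is just $\rho^n$) together with the description of the multiplication and involution of $\mathcal{A}$ via the embeddings $T\mapsto T\otimes I_r$, since $\rho^{n+r}(g)=\rho^n(g)\otimes\rho^r(g)$ intertwines these embeddings.

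Granted this formula, an element $T\in\mathcal{L}(\mathcal{H}^n,\mathcal{H}^m)$ is $G$-fixed if and only if $\rho^m(g)T=T\rho^n(g)$ for every $g\in G$, i.e.\ if and only if $T\in\mathrm{Hom}(\rho^n,\rho^m)$. Taking linear spans, this shows that the algebraic fixed-point set $\mathcal{A}^G$ equals ${}^0\mathcal{O}_{\rho_1,\dots,\rho_k}$. The first step of the proof will record this carefully.

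The second step invokes the standard conditional expectation
\[
E\colon \mathcal{O}(\mathcal{E})\to \mathcal{O}(\mathcal{E})^G,\qquad E(x)=\int_G g\cdot x\,dg,
\]
which is a contractive, faithful, $G$-invariant projection onto $\mathcal{O}(\mathcal{E})^G$. Applying $E$ to a homogeneous $T\in\mathcal{L}(\mathcal{H}^n,\mathcal{H}^m)$ yields $\int_G \rho^m(g)T\rho^n(g)^*\,dg$, which a direct translation-invariance computation shows lies in $\mathrm{Hom}(\rho^n,\rho^m)$. Hence $E(\mathcal{A})\subseteq {}^0\mathcal{O}_{\rho_1,\dots,\rho_k}$, and since $E$ fixes ${}^0\mathcal{O}_{\rho_1,\dots,\rho_k}$ pointwise we get equality $E(\mathcal{A})={}^0\mathcal{O}_{\rho_1,\dots,\rho_k}$.

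Finally, because $\mathcal{A}$ is dense in $\mathcal{O}(\mathcal{E})$ and $E$ is continuous and surjective onto $\mathcal{O}(\mathcal{E})^G$, the image $E(\mathcal{A})={}^0\mathcal{O}_{\rho_1,\dots,\rho_k}$ is dense in $\mathcal{O}(\mathcal{E})^G$. Combined with the definition of $\mathcal{O}_{\rho_1,\dots,\rho_k}$ as the closure of ${}^0\mathcal{O}_{\rho_1,\dots,\rho_k}$ inside $\mathcal{O}(\mathcal{E})$, this gives the desired identification $\mathcal{O}_{\rho_1,\dots,\rho_k}=\mathcal{O}(\mathcal{E})^G$. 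The remaining identification with $(\mathcal{O}_{d_1}\otimes\cdots\otimes\mathcal{O}_{d_k})^G$ is then immediate from the equivariant isomorphism $\mathcal{O}(\mathcal{E})\cong\mathcal{O}_{d_1}\otimes\cdots\otimes\mathcal{O}_{d_k}$ obtained in the earlier proposition, since the $G$-action commutes with the gauge action and is preserved by that isomorphism. The step I expect to require the most care is verifying that the $G$-action on $\mathcal{O}(\mathcal{E})$ really does restrict to conjugation by $\rho^n(g),\rho^m(g)$ on the matrix units of $\mathcal{A}$, coherently across the embeddings $T\mapsto T\otimes I_r$; everything else is a routine averaging argument.
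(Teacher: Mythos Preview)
Your proof is correct and follows essentially the same route as the paper: identify the $G$-action on $\mathcal{L}(\mathcal{H}^n,\mathcal{H}^m)$ as conjugation by $\rho^m(g),\rho^n(g)$, conclude that the algebraic fixed points are precisely ${}^0\mathcal{O}_{\rho_1,\dots,\rho_k}$, and then pass to closures. The one difference is that the paper simply asserts the density of ${}^0\mathcal{O}_{\rho_1,\dots,\rho_k}$ in $\mathcal{O}(\mathcal{E})^G$, whereas you supply the standard justification via the Haar-averaging conditional expectation $E$; this makes your argument more complete but not substantively different.
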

\begin{proof}
We  known from Lemma \ref{cat} that  ${\mathcal O}(\Ee)$ is isomorphic to the $C^*$-algebra generated by the linear span of 
$\ds \Aa_0= \bigcup_{m,n\in\NN^k}\Ll({\mathcal H}^n, {\mathcal H}^m)$.
The group $G$ acts on $\Ll({\mathcal H}^n, {\mathcal H}^m)$  by \[(g\cdot T)(\xi)=\rho^m(g)T(\rho^n(g^{-1})\xi)\] and the fixed point set is $Hom(\rho^n, \rho^m)$. Indeed,  we have $g\cdot T=T$ if and only if
$T\rho^n(g)=\rho^m(g)T$. This action is compatible with the embeddings and the operations, so it extends to the $*$-algebra $\Aa$ and the fixed point algebra is the linear span of $\ds \bigcup_{m,n\in\NN^k}Hom(\rho^n, \rho^m)$.

It follows that ${}^0{\mathcal O}_{\rho_1,...,\rho_k}\subseteq {\mathcal O}(\Ee)^G$ and therefore its closure ${\mathcal O}_{\rho_1,...,\rho_k}$ is isomorphic to a subalgebra of ${\mathcal O}(\Ee)^G$. For the other inclusion, any element in ${\mathcal O}(\Ee)^G$ can be approximated with
an element from  ${}^0{\mathcal O}_{\rho_1,...,\rho_k}$, hence ${\mathcal O}_{\rho_1,...,\rho_k}=\Oo(\Ee)^G$.
\end{proof}
\begin{rmk}
By left tensoring with $I_r$ for $r\in \NN^k$, we obtain  some canonical unital endomorphisms $\sigma_r$ of ${\mathcal O}_{\rho_1, ...,\rho_k}$.
\end{rmk}

In the next section, we will show that in many cases, $\mathcal O_{\rho_1,...,\rho_k}$ is isomorphic  to a corner of $C^*(\Lambda)$ for a rank $k$ graph $\Lambda$, so in some cases we can compute its $K$-theory. It would be nice to express the $K$-theory of $\Oo_{\rho_1,...,\rho_k}$ in terms of the endomorphisms $\pi\mapsto \pi\otimes \rho_i$ of the representation ring $\Rr(G)$.

%Crossed product by the gauge action $\Oo_{\rho_1,...,\rho_k}\rtimes \TT^k$ and K-theory in terms of the representation ring $R(G)=K_0(C^*(G))$.

\bigskip

%------------------------------

\section{The rank $k$ graphs}

\bigskip

For convenience, we first collect some facts about  higher rank graphs, introduced in \cite{KP}. A  rank $k$ graph  or $k$-graph $(\Lambda, d)$
consists of a countable small category $\Lambda$ with range and 
source maps $r$ and $s$  together with a functor 
$d : \Lambda \rightarrow \NN^k$ called the degree map, satisfying the  factorization 
property: for every $\lambda \in \Lambda$ and all $m, n \in \NN^k$ with $d( \lambda ) = m + n$, there are unique elements
$\mu , \nu \in \Lambda$ such that $\lambda = \mu \nu $ and
$d( \mu ) = m$, $d( \nu ) = n$. For $n \in \NN^k$ we write
$\Lambda^n := d^{-1} (n)$ and call it  the set of paths of degree $n$. The elements in $\Lambda^{\ve_i}$ are called edges and the elements in $\Lambda^0$ are called vertices.

A $k$-graph $\Lambda$ can be constructed from $\Lambda^0$ and from its $k$-coloured skeleton $\Lambda^{\ve_1}\cup\cdots \cup\Lambda^{\ve_k}$ using a complete and associative collection of commuting squares or factorization rules, see \cite{S}.

The $k$-graph $\Lambda$ is {\em row-finite} if for all $n\in \NN^k$ and all $v\in \Lambda^0$ the set $v\Lambda^n := \{\lambda\in\Lambda^n : r(\lambda) = v\}$ is finite. It has no sources if $v\Lambda^n\neq \emptyset$ for all $v\in \Lambda^0$ and $n\in\NN^k$.
A $k$-graph  $\Lambda$ is said to be {\em irreducible} (or {\em strongly connected}) if, for every $u,v\in \Lambda^0$, there is $\lambda\in \Lambda$    such that $u = r(\lambda)$ and $v = s(\lambda)$.

Recall that $C^*(\Lambda)$ is the universal $C^*$-algebra generated by a family $\{S_\lambda: \lambda\in \Lambda \}$ of partial isometries satisfying:
\begin{itemize}
\item  $\{S_v:v\in \Lambda^0\}$ is a family of mutually orthogonal projections, 

\item $S_{\lambda\mu}=S_\lambda S_\mu$  for all $\lambda, \mu\in \Lambda$  such that $s(\lambda) = r(\mu)$,

\item $S_\lambda^*S_\lambda = S_{s(\lambda)}$ for all $\lambda\in \Lambda$,

\item for all  $v\in \Lambda^0$ and $n\in \NN^k$ we have \[ S_v=\sum_{\lambda\in v\Lambda^n}S_\lambda S_\lambda^*.\] 
\end{itemize}

A $k$-graph $\Lambda$ is said to satisfy the {\em aperiodicity condition}  if for every vertex $v\in \Lambda^0$ there is an infinite path $x\in v\Lambda^\infty$ such that $\sigma^mx\neq \sigma^nx$ for all $m\neq n$ in $\NN^k$, where $\sigma^m:\Lambda^\infty\to \Lambda^\infty$ are the shift maps.
We say that $\Lambda$ is {\em cofinal} if for every $x\in \Lambda^\infty$ and $v\in \Lambda^0$ there is $\lambda\in \Lambda$ and $n\in \NN^k$  such that $s(\lambda)=x(n)$ and $r(\lambda)=v$.

Assume that  $\Lambda$ is row finite with no sources and that it satisfies  the aperiodicity condition. Then $C^*(\Lambda)$ is simple if and only if $\Lambda$ is cofinal (see Proposition 4.8 in \cite{KP} and  Theorem 3.4 in \cite{RS}). 

We say that a path $\mu\in \Lambda$ is a loop with an entrance if $s(\mu)=r(\mu)$ and there exists $\alpha\in s(\mu)\Lambda$ such that $d(\mu)\ge d(\alpha)$ and there is no $\beta\in \Lambda$ with $\mu= \alpha\beta$. We say that every vertex {\em connects to a loop with an entrance} if for every $v\in \Lambda^0$  there are a loop with an entrance $\mu\in \Lambda$  and a path $\lambda\in \Lambda$ with    $r(\lambda)=v$ and   $s(\lambda)=r(\mu)=s(\mu)$. If $\Lambda$ satisfies  the aperiodicity condition and  every vertex connects to a loop with an entrance, then $C^*(\Lambda)$ is purely infinite (see Proposition 4.9 in \cite{KP} and Proposition 8.8 in \cite{ S06}).

%%%%%%%%%%_____________

Given finitely dimensional unitary representations $\rho_i$  of a compact group $G$ on Hilbert spaces $\mathcal H_i$ for $i=1,...,k$, we want to  construct a  rank $k$ graph $\Lambda=\Lambda(\rho_1,...,\rho_k)$. Let $R$ be the set of  equivalence classes of irreducible summands $\pi:G\to U(\Hh_\pi)$ which appear in the tensor powers $\rho^n=\rho_1^{\otimes n_1}\otimes\cdots \otimes \rho_k^{\otimes n_k}$ for $n\in \NN^k$ as in \cite{MRS}. 
Take $\Lambda^0=R$   and for each $i=1,...,k$ consider the set of  edges $\Lambda^{\ve_i}$ which are uniquely determined by the  matrices $M_i$ with entries  
\[M_i(w,v)=|\{e\in \Lambda^{\ve_i}: s(e)=v, r(e)=w\}|=\dim Hom(v,w\otimes \rho_i),\] 
where $v,w\in R$.
The matrices $M_i$ commute since $\rho_i\otimes\rho_j\cong \rho_j\otimes \rho_i$ and therefore \[\dim Hom(v,w\otimes \rho_i\otimes\rho_j)=\dim Hom(v,w\otimes \rho_j\otimes\rho_i)\] 
for all $i<j$. 
 This will allow us to fix some bijections  \[\lambda_{ij}:\Lambda^{\ve_i}\times_{\Lambda^0}\Lambda^{\ve_j}\to \Lambda^{\ve_j}\times_{\Lambda^0}\Lambda^{\ve_i}\] for all $1\le i<j\le k$, which will determine the commuting squares of $\Lambda$. As usual,
\[\Lambda^{\ve_i}\times_{\Lambda^0}\Lambda^{\ve_j}=\{(e,f)\in \Lambda^{\ve_i}\times \Lambda^{\ve_j}: s(e)=r(f)\}.\]

For $k\ge 3$ we also need to verify that $\lambda_{ij}$ can be chosen to satisfy the associativity condition, i.e.
 \[(id_\ell\times \lambda_{ij})(\lambda_{i\ell}\times id_j)(id_i\times \lambda_{j\ell})=(\lambda_{j\ell}\times id_i)(id_j\times \lambda_{i\ell})(\lambda_{ij}\times id_\ell)\] 
 as bijections from $\Lambda^{\ve_i}\times_{\Lambda^0}\Lambda^{\ve_j}\times_{\Lambda^0}\Lambda^{\ve_\ell}$ to $\Lambda^{\ve_\ell}\times_{\Lambda^0}\Lambda^{\ve_j}\times_{\Lambda^0}\Lambda^{\ve_i}$ for all $i<j<\ell$. 

\begin{rmk}
Many times $R=\hat{G}$, so $\Lambda^0=\hat{G}$, for example if $\rho_i$ are faithful and $\rho_i(G)\subseteq SU(\Hh_i)$  or if $G$ is finite, $\rho_i$ are faithful and $\dim \rho_i\ge2$  for all $i=1,...,k$, see Lemma 7.2 and Remark 7.4 in \cite{KPRR}.
%If some $\rho_i$ are not faithful, then we can replace $G$ with a quotient $G/N$ such that we can view $\rho_i$ as faithful representations of $G/N$.

%What if $\dim \rho_1=1$ or $\rho_1=\rho_2$ ?
\end{rmk} 

\begin{prop}
Given representations $\rho_1,...,\rho_k$ as above, assume that $\rho_i$ are faithful and that $R=\hat{G}$. Then each choice of bijections $\lambda_{ij}$ satisfying the associativity condition determines a rank $k$ graph $\Lambda$   which is cofinal and locally finite with no sources.
\end{prop}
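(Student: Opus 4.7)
The plan is to split the statement into four ingredients: that $\Lambda$ is a rank $k$ graph, that it has no sources, that it is locally finite, and that it is cofinal. The first three are essentially bookkeeping; only cofinality needs genuine representation-theoretic input. For the existence of $\Lambda$ as a $k$-graph, once bijections $\lambda_{ij}$ satisfying the associativity condition have been fixed, the theorem of Hazlewood--Raeburn--Sims cited as \cite{FS} promotes the $k$-coloured skeleton equipped with its commuting squares into a unique $k$-graph, so nothing further is needed on that front.

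For local finiteness and the absence of sources, both of which depend only on the matrices $M_i$, not on the chosen factorization rules, I would argue as follows. The edges of colour $i$ with range $v$ are counted by $\sum_w M_i(v,w)=\sum_w \dim Hom(w,v\otimes \rho_i)$, which is the total number of irreducible summands (with multiplicity) of the finite-dimensional representation $v\otimes \rho_i$ and is therefore bounded by $(\dim v)(\dim \rho_i)<\infty$; a dual count applied to $v\otimes \rho_i^*$ bounds the edges of colour $i$ with source $v$, so $\Lambda$ is locally finite in both directions. For the absence of sources, given $v\in \hat G=R$, write $v\le \rho^n$ for some $n$; then $v\otimes \rho_i\le \rho^{n+\ve_i}$, and any irreducible summand $w$ of $v\otimes \rho_i$ lies in $R$ and contributes an edge in $v\Lambda^{\ve_i}$. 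The factorization property propagates both conclusions from the generating degrees $\ve_i$ to arbitrary $n\in\NN^k$.

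The main step is cofinality, which I would prove in the stronger form of strong connectivity (irreducibility), since this immediately implies the stated cofinality condition (one merely needs a path from $x(0)$ to the given vertex $v$). Given $u,v\in\hat G$, I seek $n\in\NN^k$ with $v\Lambda^n u\neq \emptyset$, equivalently $\dim Hom(u,v\otimes \rho^n)>0$, equivalently $\iota\le u^*\otimes v\otimes \rho^n$ by the Hom-tensor adjunction. Decomposing $u^*\otimes v$ into irreducibles and picking any summand $w$, it suffices to arrange $\iota\le w\otimes \rho^n$, i.e.\ $w^*\le \rho^n$; the hypothesis $R=\hat G$ supplies exactly this, since $w^*\in \hat G=R$ occurs as a summand of some $\rho^n$. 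The main obstacle is this cofinality step: it would fail without $R=\hat G$, because then some irreducibles are never realized in any $\rho^n$ and the requisite $n$ cannot be produced. Faithfulness of the $\rho_i$ enters only indirectly, as a convenient sufficient condition ensuring $R=\hat G$ in the main examples (e.g.\ via Burnside--Brauer for finite groups, as observed in the remark just before the proposition).
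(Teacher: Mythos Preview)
Your proof is correct and follows essentially the same approach as the paper's: the structural claims (existence of $\Lambda$, local finiteness, no sources) are bookkeeping from the skeleton data, and cofinality reduces to strong connectivity via the representation-theoretic fact that every irreducible lies in some $\rho^n$, which is exactly what the paper invokes by citing Lemma~7.2 of \cite{KPRR} and Lemma~3.1 of \cite{MRS}. The only minor slip is bibliographic: the skeleton-to-$k$-graph construction is the content of \cite{S} (or Hazlewood--Raeburn--Sims--Webster), not \cite{FS}, which is Fowler--Sims on product systems over right-angled Artin semigroups.
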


\begin{proof}
Indeed, the sets $\Lambda^{\ve_i}$ are uniquely determined and the choice of bijections $\lambda_{ij}$ satisfying the associativity condition will be enough to determine $\Lambda$. Since the entries of the matrices $M_i$ are finite and there are no zero rows, the graph is locally finite with no sources. To prove that $\Lambda$ is cofinal, fix a vertex $v\in \Lambda^0$ and an infinite path $x\in \Lambda^\infty$. Arguing as in Lemma 7.2 in \cite{KPRR}, any $w\in \Lambda^0$, in particular $w=x(n)$ for a fixed $n$ can be joined by a path  to $v$, so there is $\lambda\in \Lambda$ with $s(\lambda)=x(n)$ and $r(\lambda)=v$. See also Lemma 3.1 in \cite{MRS}.
\end{proof}
%When does $\Lambda$ satisfy the aperiodicity condition?
\begin{rmk}
Note that the entry $M_i(w,v)$ is just the multiplicity of the irreducible representation $v$ in $w\otimes \rho_i$ for $i=1,...,k$. If $\rho_i^*=\rho_i$, the matrices $M_i$ are  symmetric since \[\dim Hom(v, w\otimes \rho_i)=\dim Hom(\rho^*_i\otimes v,w).\] Here $\rho^*_i$ denotes the dual representation, defined by $\rho_i^*(g)=\rho_i(g^{-1})^t$, and equal in our case to the conjugate representation $\bar{\rho_i}$.  

For $G$ finite, these matrices are finite, and the entries $M_i(w,v)$ can be computed using the character table of $G$. For $G$ infinite,  the Clebsch-Gordan relations can be used  to determine the numbers $M_i(w,v)$. Since the bijections $\lambda_{ij}$ in general are not unique,  the rank $k$ graph $\Lambda$ is not unique, as illustrated in some examples.  It is an open question how the $C^*$-algebra $C^*(\Lambda)$  depends in general on the factorization rules.
\end{rmk}

To relate  the Doplicher-Roberts algebra $\Oo_{\rho_1,...,\rho_k}$ to a rank $k$ graph $\Lambda$, we mimic the construction in \cite{MRS}.
 For each edge $e\in \Lambda^{\ve_i}$, choose an isometric intertwiner \[T_e:
\Hh_{s(e)}\to \Hh_{r(e)}\otimes \Hh_i\] in such a way that 
\[\Hh_\pi\otimes \Hh_i=\bigoplus_{e\in \pi\Lambda^{\ve_i}}T_eT_e^*(\Hh_\pi\otimes \Hh_i)\]
for all $\pi\in \Lambda^0$, i.e. the edges in $\Lambda^{\ve_i}$ ending at $\pi$ give a specific decomposition of $\Hh_\pi\otimes \Hh_i$ into irreducibles. When $\dim Hom(s(e), r(e)\otimes \rho_i)\ge 2$ we must choose a basis of isometric intertwiners with orthogonal ranges, so in general $T_e$ is not unique. In fact, specific choices for the isometric intertwiners $T_e$ will determine the factorization rules in $\Lambda$ and whether they satisfy  the associativity condition or not. 

Given $e\in \Lambda^{\ve_i}$ and $f\in \Lambda^{\ve_j}$ with $r(f)=s(e)$,  we know how to multiply $T_e\in Hom(s(e),r(e)\otimes \rho_i)$ with $T_f\in Hom(s(f),r(f)\otimes \rho_j)$ in the algebra $\Oo_{\rho_1,...,\rho_k}$, by viewing $Hom(s(e),r(e)\otimes \rho_i)$ as a subspace of $Hom(\rho^n,\rho^m)$ for some $m,n$ and similarly for $Hom(s(f),r(f)\otimes \rho_j)$. We choose edges $e'\in \Lambda^{\ve_i}, f'\in \Lambda^{\ve_j}$ with $s(f)=s(e'), r(e)=r(f'), r(e')=s(f')$ such that $T_eT_f=T_{f'}T_{e'}$, where $T_{f'}\in Hom(s(f'),r(f')\otimes \rho_j)$ and $T_{e'}\in Hom(s(e'),r(e')\otimes \rho_i)$.
This is possible since 
\[T_eT_f=(T_e\otimes I_j)\circ T_f\in Hom(s(f),r(e)\otimes \rho_i\otimes \rho_j),\]
\[T_{f'}T_{e'}=(T_{f'}\otimes I_i)\circ T_{e'}\in Hom(s(e'),r(f')\otimes \rho_j\otimes \rho_i),\]
 and $\rho_i\otimes \rho_j\cong \rho_j\otimes \rho_i$. In this case we declare that $ef=f'e'$. Repeating this process, we obtain bijections $\lambda_{ij}:\Lambda^{\ve_i}\times_{\Lambda^0}\Lambda^{\ve_j}\to \Lambda^{\ve_j}\times_{\Lambda^0}\Lambda^{\ve_i}$.  Assuming that the associativity conditions are satisfied, we obtain a $k$-graph $\Lambda$. 
 
 We write $T_{ef}=T_eT_f=T_{f'}T_{e'}=T_{f'e'}$.
A finite path $\lambda\in \Lambda^n$ is a concatenation of edges and determines by composition a unique intertwiner \[T_\lambda:\Hh_{s(\lambda)}\to \Hh_{r(\lambda)}\otimes\Hh^n.\] Moreover, the paths $\lambda\in \Lambda^n$ with $r(\lambda)=\iota$, the trivial representation, provide an explicit decomposition of $\Hh^n=\Hh_1^{\otimes n_1}\otimes\cdots\otimes\Hh_k^{\otimes n_k}$ into irreducibles, hence
\[\Hh^n=\bigoplus_{\lambda\in\iota \Lambda^n}T_\lambda T_\lambda^*(\Hh^n).\]

\begin{prop} Assuming that the choices of isometric intertwiners $T_e$ as above determine a $k$-graph $\Lambda$, then
the family 
\[\{T_\lambda T^*_\mu: \lambda\in\Lambda^m, \mu\in\Lambda^n, r(\lambda)=r(\mu)=\iota, s(\lambda)=s(\mu)\}\]
is a basis for $Hom(\rho^n, \rho^m)$ and each $T_\lambda T^*_\mu$ is a partial isometry.
\end{prop}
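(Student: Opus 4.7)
The plan is to prove the statement in three stages: (i) check that each $T_\lambda T_\mu^*$ is a partial isometry in $Hom(\rho^n,\rho^m)$; (ii) establish linear independence via matrix-unit-like orthogonality relations; (iii) match the cardinality of the family with $\dim Hom(\rho^n,\rho^m)$ via Schur's lemma.

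For (i), I would argue that since each $T_e$ is an isometric intertwiner and both composition and adjoint preserve the intertwiner property, every $T_\lambda$ obtained by concatenating edges along the path $\lambda$ is an isometric intertwiner $\Hh_{s(\lambda)}\to \Hh_{r(\lambda)}\otimes \Hh^{d(\lambda)}$, proved by induction on $|d(\lambda)|$. Specializing to $r(\lambda)=\iota$ gives $T_\lambda:\Hh_{s(\lambda)}\to \Hh^n$ as an isometric intertwiner. Then $T_\lambda T_\mu^*\in Hom(\rho^n,\rho^m)$, and it is a partial isometry because $T_\mu T_\mu^*$ is an orthogonal projection and $T_\lambda$ is an isometry on its domain.

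For (ii), the crucial input is the orthogonality relation $T_{\lambda'}^*T_\lambda=\delta_{\lambda,\lambda'}\,I_{\Hh_{s(\lambda)}}$ for $\lambda,\lambda'\in\iota\Lambda^m$ (and similarly for $\mu$). When $\lambda=\lambda'$ this is just the isometry property; when $\lambda\neq \lambda'$, it follows from the orthogonal decomposition
\[\Hh^m=\bigoplus_{\lambda\in\iota\Lambda^m}T_\lambda T_\lambda^*(\Hh^m)\]
recalled just above the statement, which forces the ranges of distinct $T_\lambda$ to be mutually orthogonal subspaces of $\Hh^m$. From this, $(T_{\lambda'}T_{\mu'}^*)^*(T_\lambda T_\mu^*)=T_{\mu'}T_{\lambda'}^*T_\lambda T_\mu^*$ vanishes unless $\lambda=\lambda'$, in which case it equals $T_{\mu'}T_\mu^*$. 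A standard argument then yields linear independence of the full family.

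For (iii), I would compute dimensions two ways. By Schur's lemma,
\[\dim Hom(\rho^n,\rho^m)=\sum_{\pi\in\hat G} m_n(\pi)\,m_m(\pi),\]
where $m_p(\pi)$ denotes the multiplicity of $\pi$ in $\rho^p$. The orthogonal decomposition of $\Hh^n$ indexed by $\iota\Lambda^n$ identifies $m_n(\pi)=|\{\lambda\in\iota\Lambda^n:s(\lambda)=\pi\}|$, and similarly for $m_m(\pi)$. Hence the number of admissible pairs $(\lambda,\mu)$ with $s(\lambda)=s(\mu)=\pi$ is exactly $m_n(\pi)m_m(\pi)$, and summing over $\pi\in\hat G$ returns precisely $\dim Hom(\rho^n,\rho^m)$. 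Combined with linear independence, this gives a basis.

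The main obstacle is really just setting up the orthogonality relation $T_{\lambda'}^* T_\lambda=\delta_{\lambda,\lambda'}\,I$ cleanly; once that is in place, the partial isometry property and the Schur dimension count are routine. A secondary subtlety is that $T_\lambda$ depends on a choice of factorization of $\lambda$ into edges, but since $\Lambda$ is a $k$-graph, the unique factorization property makes this unambiguous.
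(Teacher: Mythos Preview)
Your argument is correct, and the overall strategy is close to the paper's, but the logical organization differs in one notable way. The paper does not separate linear independence from spanning via a dimension count. Instead, it applies Schur's lemma block-by-block: given the orthogonal decompositions $\Hh^m=\bigoplus_{\lambda\in\iota\Lambda^m}T_\lambda(\Hh_{s(\lambda)})$ and $\Hh^n=\bigoplus_{\mu\in\iota\Lambda^n}T_\mu(\Hh_{s(\mu)})$, any $T\in Hom(\rho^n,\rho^m)$ decomposes into components mapping the summand $T_\mu(\Hh_{s(\mu)})$ to the summand $T_\lambda(\Hh_{s(\lambda)})$; each such component is an intertwiner between irreducibles, hence zero unless $s(\lambda)=s(\mu)$ and in that case a scalar multiple of $T_\lambda T_\mu^*$. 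This single invocation of Schur yields spanning and uniqueness of the coefficients at once, so no separate cardinality match is needed.

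Your route---orthogonality relations $T_{\lambda'}^*T_\lambda=\delta_{\lambda,\lambda'}I$ for independence, followed by the multiplicity identity $\dim Hom(\rho^n,\rho^m)=\sum_\pi m_n(\pi)m_m(\pi)$ for spanning---is equally valid and has the minor benefit of making the matrix-unit structure of the family explicit, which foreshadows the map $T_\lambda T_\mu^*\mapsto S_\lambda S_\mu^*$ used in the next theorem. The paper's route is shorter because Schur's lemma is doing both jobs simultaneously rather than only powering the dimension formula.
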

\begin{proof}
Each pair of paths $\lambda, \mu$ with $d(\lambda)=m, d(\mu)=n$ and $r(\lambda)=r(\mu)=\iota$ determines a pair of irreducible summands $T_\lambda(\Hh_{s(\lambda)}), T_\mu(\Hh_{s(\mu)})$ of $\Hh^m$ and $ \Hh^n$ respectively. By Schur's lemma, the space of intertwiners of these representations is trivial unless $s(\lambda)=s(\mu)$ in which case it is the one dimensional space spanned by $T_\lambda T_\mu^*$. It follows that any element of $Hom(\rho^n, \rho^m)$ can be uniquely represented as a linear combination of elements $T_\lambda T_\mu^*$ where $s(\lambda)=s(\mu)$. Since $T_\mu$ is isometric, $T_\mu^*$ is a partial isometry with range $\Hh_{s(\mu)}$ and hence $T_\lambda T_\mu^*$ is also a partial isometry whenever $s(\lambda)=s(\mu)$.
\end{proof}

\begin{thm}\label{t1}
Consider $\rho_1,..., \rho_k$  finite dimensional unitary representations of a compact group $G$ and let $\Lambda$ be the $k$-coloured graph with $\Lambda^0=R\subseteq \hat{G}$ and edges $\Lambda^{\ve_i}$ determined by the incidence matrices $M_i$ defined above.  Assume that the factorization rules determined by the choices of $T_e\in Hom(s(e),r(e)\otimes \rho_i)$ for all edges $e\in \Lambda^{\ve_i}$ satisfy  the associativity condition, so $\Lambda$ becomes a rank $k$ graph. If we consider $P\in C^*(\Lambda)$,
\[P=\sum_{\lambda\in\iota \Lambda^{(1,...,1)}}S_\lambda S_\lambda^*,\]
where $\iota$ is the trivial representation, then there is a $*$-isomorphism of the Doplicher-Roberts algebra $\Oo_{\rho_1,...,\rho_k}$ onto the corner $PC^*(\Lambda)P$.
\end{thm}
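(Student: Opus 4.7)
The strategy is to exhibit an explicit $*$-isomorphism using the bases of each side.

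First I would observe that applying the Cuntz-Krieger relation at the vertex $\iota$ with exponent $(1,\ldots,1)$ yields $S_\iota = \sum_{\lambda\in\iota\Lambda^{(1,\ldots,1)}}S_\lambda S_\lambda^* = P$, so $PC^*(\Lambda)P = S_\iota C^*(\Lambda) S_\iota$. By standard $k$-graph calculus this corner has as a dense spanning set $\{S_\lambda S_\mu^* : r(\lambda)=r(\mu)=\iota,\ s(\lambda)=s(\mu)\}$, indexed by precisely the pairs parametrizing the basis of $Hom(\rho^n,\rho^m)$ given in the preceding proposition.

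Next I would define $\Phi(S_\lambda S_\mu^*) := T_\lambda T_\mu^*$ on the algebraic spanning set and extend linearly. Preservation of the $*$-operation is immediate from $(S_\lambda S_\mu^*)^* = S_\mu S_\lambda^*$ and $(T_\lambda T_\mu^*)^* = T_\mu T_\lambda^*$. The crucial check is multiplicativity: in $C^*(\Lambda)$ the product $(S_\lambda S_\mu^*)(S_\alpha S_\beta^*)$ is evaluated by reducing $S_\mu^* S_\alpha$ via the factorization property of $\Lambda$, while in $\Oo_{\rho_1,\ldots,\rho_k}$ the corresponding product $(T_\lambda T_\mu^*)(T_\alpha T_\beta^*)$ is evaluated by reducing $T_\mu^* T_\alpha$ using the embeddings $T\mapsto T\otimes I_r$ together with the relations $T_eT_f = T_{f'}T_{e'}$. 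These two reduction schemes coincide because the commuting squares of $\Lambda$ were defined precisely from the identities $T_eT_f = T_{f'}T_{e'}$ in $\Aa$, and the assumed associativity of the $\lambda_{ij}$ guarantees that both procedures are well-defined and terminate in matching sums of elementary terms.

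Finally, $\Phi$ is surjective onto a dense subspace of $\Oo_{\rho_1,\ldots,\rho_k}$ and injective because $\{T_\lambda T_\mu^*\}$ is linearly independent by the preceding proposition. Since both algebras carry natural $\ZZ^k$-gradings and $\Phi$ preserves them, extension to the $C^*$-completions and the isomorphism statement follow either from the gauge-invariant uniqueness theorem applied to $\Phi$, or by identifying the degree-zero cores on each side as matching AF algebras indexed by pairs $(\lambda,\mu)$ with $d(\lambda)=d(\mu)$. I expect the main obstacle to be the multiplicativity verification: after unfolding the embeddings $T\mapsto T\otimes I_r$, one must track how $T_\mu^* T_\alpha$ reduces in $\Aa$ and check that this matches the $k$-graph reduction of $S_\mu^* S_\alpha$, which is precisely where the assumed associativity of the commuting squares is essential.
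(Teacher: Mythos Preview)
Your proposal is essentially the same argument as the paper's, run in the opposite direction: the paper defines $\phi:{}^0\Oo_{\rho_1,\ldots,\rho_k}\to PC^*(\Lambda)P$ by $\phi(T_\lambda T_\mu^*)=S_\lambda S_\mu^*$, while you define $\Phi$ going the other way. Your observation that $P=S_\iota$ (from the Cuntz--Krieger relation at $\iota$) is a clean simplification the paper does not state explicitly. The multiplicativity check you sketch is exactly what the paper carries out in detail: it computes $T_\mu^*T_\nu$ (and $S_\mu^*S_\nu$) case-by-case according to whether $\mu=\nu\beta$, $\nu=\mu\alpha$, or neither, and shows the two reductions match.

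One point you gloss over that the paper handles explicitly: when you write ``define $\Phi(S_\lambda S_\mu^*):=T_\lambda T_\mu^*$ and extend linearly,'' the elements $S_\lambda S_\mu^*$ are not linearly independent in $C^*(\Lambda)$ (the Cuntz--Krieger relations give $S_\lambda S_\mu^*=\sum_{\nu\in s(\lambda)\Lambda^r}S_{\lambda\nu}S_{\mu\nu}^*$), so well-definedness must be checked. This amounts to verifying $T_\lambda T_\mu^*\otimes I_r=\sum_\nu T_{\lambda\nu}T_{\mu\nu}^*$, which is precisely the compatibility computation the paper performs (phrased there as $\phi_{n+r,m+r}(T_\lambda T_\mu^*\otimes I_r)=\phi_{n,m}(T_\lambda T_\mu^*)$). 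Once that is in place your argument and the paper's coincide, and both finish injectivity via gauge-equivariance.
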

\begin{proof}
%Indeed, assuming 
%\[efg=f_1e_1g=f_1g_1e_2=g_2f_2e_2\;\;\text{and}\; efg=eg_1'f_1'=g_2'e_1'f_1'=g_2'f_2'e_2',\]
%we must have $T{g_2}=T_{g_2'}, T_{f_2}T_{f_2'}, T_{e_2}=T_{e_2'}$.

Since $C^*(\Lambda)$ is generated by linear combinations of $S_\lambda S_\mu^*$ with $s(\lambda)=s(\mu)$ (see Lemma 3.1 in \cite{KP}), we first define the maps \[\phi_{n,m}:Hom(\rho^n, \rho^m)\to C^*(\Lambda),\;\; \phi_{n,m}(T_\lambda T_\mu^*)=S_\lambda S_\mu^*\] where $s(\lambda)=s(\mu)$ and $r(\lambda)=r(\mu)=\iota$. Since 
$S_\lambda S_\mu^*=PS_\lambda S_\mu^*P$, the maps $\phi_{n,m}$ take values in $PC^*(\Lambda)P$. We claim that for any $r\in \NN^k$ we have
\[\phi_{n+r,m+r}(T_\lambda T_\mu^*\otimes I_r)=\phi_{n,m}(T_\lambda T_\mu^*).\]
This is because 
\[\Hh_{s(\lambda)}\otimes \Hh^r=\bigoplus_{\nu\in s(\lambda)\Lambda^r}T_\nu T_\nu^*(\Hh_{s(\lambda)}\otimes \Hh^r),\]
 so that
\[T_\lambda T_\mu^*\otimes I_r=\sum_{\nu\in s(\lambda)\Lambda^r}(T_\lambda\otimes I_r)(T_\nu T_\nu^*)(T_\mu^*\otimes I_r)=\sum_ {\nu\in s(\lambda)\Lambda^r} T_{\lambda\nu}T_{\mu\nu}^*\]
and\[S_\lambda S_\mu^*=\sum_{\nu\in s(\lambda)\Lambda^r}S_\lambda(S_\nu S_\nu^*)S_\mu^*=\sum_{\nu\in s(\lambda)\Lambda^r}S_{\lambda\nu}S_{\mu\nu}^*.\]
The maps $\phi_{n,m}$ determine a map $\phi: {}^0\Oo_{\rho_1,...,\rho_k}\to PC^*(\Lambda)P$ which is linear, $*$-preserving and multiplicative. 
Indeed, 
\[\phi_{n,m}(T_\lambda T_\mu^*)^*=(S_\lambda S_\mu^*)^*=S_\mu S_\lambda^*=\phi_{m,n}(T_\mu T_\lambda^*).\]
Consider now $T_\lambda T_\mu^*\in Hom(\rho^n, \rho^m),\;\; T_\nu T_\omega^*\in Hom (\rho^q,\rho^p)$
with $s(\lambda)=s(\mu), s(\nu)=s(\omega), r(\lambda)=r(\mu)=r(\nu)=r(\omega)=\iota$. Since for all $n\in \NN^k$
\[\sum_{\lambda\in\iota\Lambda^n}T_\lambda T_\lambda^*=I_n,\]
we get
\[T_\mu^*T_\nu= \begin{cases} T_\beta^*\;\; \text{if}\;\; \mu=\nu\beta\\ T_\alpha \;\;\text{if}\;\; \nu=\mu\alpha\\0 \;\;\text{otherwise,}\end{cases}\]
hence
\[\phi((T_\lambda T_\mu^*)(T_\nu T_\omega^*))=\begin{cases} \phi(T_\lambda T_{\omega\beta}^*)=S_\lambda S_{\omega\beta}^*\;\;\text{if} \;\; \mu=\nu\beta\\
\phi(T_{\lambda\alpha}T_\omega^*)=S_{\lambda\alpha}S_\omega^*\;\;\text{if}\;\; \nu=\mu\alpha\\0 \;\;\text{otherwise.}\end{cases}\]
On the other hand, from  Lemma 3.1 in \cite{KP},
\[S_\lambda S_\mu^* S_\nu S_{\omega}^*=\begin{cases} S_\lambda S_{\omega\beta}^*\;\;\text{if} \;\; \mu=\nu\beta\\
S_{\lambda\alpha}S_\omega^*\;\;\text{if}\;\; \nu=\mu\alpha\\0 \;\;\text{otherwise,}\end{cases}\]
hence \[\phi((T_\lambda T_\mu^*)(T_\nu T_\omega^*))=\phi(T_\lambda T_\mu^*)\phi(T_\nu T_\omega^*).\]

Since $PS_\lambda S_\mu^*P=\phi_{n,m}(T_\lambda T_\mu^*)$ if $r(\lambda)=r(\mu)=\iota$ and $s(\lambda)=s(\mu)$, it follows that $\phi$ is surjective. Injectivity follows from the fact that $\phi$ is equivariant for the gauge action.
\end{proof}

\begin{cor}
If the $k$-graph $\Lambda$ associated to $\rho_1, ...,\rho_k$ is cofinal, it satisfies the aperiodicity condition and every vertex connects to a loop with an entrance, then the Doplicher-Roberts algebra $\Oo_{\rho_1,...,\rho_k}$ is simple and purely infinite,  and is Morita equivalent with $C^*(\Lambda)$. 
\end{cor}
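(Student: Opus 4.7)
The plan is to combine Theorem \ref{t1}, which identifies $\Oo_{\rho_1,\ldots,\rho_k}$ with the corner $PC^*(\Lambda)P$, with the simplicity and pure infiniteness criteria for $k$-graph $C^*$-algebras recalled earlier in this section. Under the stated hypotheses, $\Lambda$ is row-finite with no sources (from its construction), cofinal, and aperiodic, so Proposition 4.8 of \cite{KP} together with Theorem 3.4 of \cite{RS} yield that $C^*(\Lambda)$ is simple. Combining aperiodicity with the hypothesis that every vertex connects to a loop with an entrance, Proposition 4.9 of \cite{KP} and Proposition 8.8 of \cite{S06} then give that $C^*(\Lambda)$ is purely infinite.

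Next I would transport these properties through the corner identification. First one checks that $P\neq 0$: the trivial representation $\iota$ lies in $R=\Lambda^0$ since $\rho^0=\iota$, and because $\Lambda$ has no sources, $\iota\Lambda^{(1,\ldots,1)}$ is nonempty, so $P=\sum_{\lambda\in\iota\Lambda^{(1,\ldots,1)}}S_\lambda S_\lambda^*$ is a nonzero sum of pairwise orthogonal projections. Since $C^*(\Lambda)$ is simple, every nonzero projection in it generates $C^*(\Lambda)$ as a two-sided ideal; in particular $P$ is a full projection, and hence the hereditary $C^*$-subalgebra $PC^*(\Lambda)P$ is a full corner of $C^*(\Lambda)$.

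From here the conclusion is immediate by standard results: a full corner of a $C^*$-algebra $A$ is Morita equivalent to $A$ via the imprimitivity bimodule $PA$, so $\Oo_{\rho_1,\ldots,\rho_k}\cong PC^*(\Lambda)P$ is Morita equivalent to $C^*(\Lambda)$. Both simplicity and pure infiniteness are preserved under Morita equivalence (equivalently, any full hereditary $C^*$-subalgebra of a simple purely infinite $C^*$-algebra is itself simple and purely infinite), so $\Oo_{\rho_1,\ldots,\rho_k}$ inherits these properties.

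There is no serious obstacle here, since every ingredient is either quoted from the $k$-graph literature or is a standard fact about corners; the only verification requiring mild care is that $P\neq 0$, which is handled by the observation above that $\iota\in\Lambda^0$ and $\Lambda$ has no sources.
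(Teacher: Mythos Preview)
Your proof is correct and follows exactly the same route as the paper's: invoke Theorem \ref{t1} to realize $\Oo_{\rho_1,\ldots,\rho_k}$ as the corner $PC^*(\Lambda)P$, use the cited $k$-graph criteria to conclude that $C^*(\Lambda)$ is simple and purely infinite, and then observe that $P$ is a full projection so the corner is Morita equivalent to $C^*(\Lambda)$ and inherits simplicity and pure infiniteness. The paper compresses all of this into two sentences; your version simply makes explicit why $P\neq 0$ and why a nonzero projection in a simple $C^*$-algebra is automatically full, which are exactly the details the paper leaves implicit.
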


\begin{proof}
This follows from the fact that $C^*(\Lambda)$ is simple and purely infinite and because $PC^*(\Lambda)P$ is a full corner. 
%If $\rho_1=\rho_2$, then $C^*(\Lambda)\cong C^*(E)\otimes C(\TT)$.
\end{proof}

\begin{rmk}
There is a groupoid $\Gg_\Lambda$ associated to a row-finite rank $k$ graph $\Lambda$ with no sources, see \cite{KP}. By taking the pointed groupoid $\Gg_\Lambda(\iota)$, the reduction to the set of infinite paths with range $\iota$, under the same conditions as in Theorem \ref{t1}, we get an isomorphism of the Doplicher-Roberts algebra $\Oo_{\rho_1,...,\rho_k}$ onto $C^*(\Gg_\Lambda(\iota))$.
\end{rmk}

\bigskip

%-----------------------
\section{Examples}
\bigskip

\begin{example}
Let $G=S_3$ be the symmetric group with $\hat{G}=\{\iota, \epsilon, \sigma\}$ and character table

\bigskip

\begin{center}
\begin{tabular}{c|c|r|r}{}&$(1)$ &$(12)$& $(123)$\\
\hline
$\iota$& $1$&$1$&$1$\\
\hline
$\epsilon$ &$1$&$-1$&$1$\\
\hline
$\sigma$ &$2$&$0$&$-1$\\
\hline

\end{tabular}
\end{center}

\bigskip
Here $\iota$ denotes the trivial representation, $\epsilon$ is the sign representation and $\sigma$ is  an irreducible $2$-dimensional representation, for example
\[\sigma((12))=\left[\begin{array}{rr}-1&-1\\0&1\end{array}\right],\;\;\sigma((123))=\left[\begin{array}{rr}-1&-1\\1&0\end{array}\right].\]

By choosing  $\rho_1=\sigma$ on $\Hh_1=\CC^2$ and $\rho_2=\iota+\sigma$ on $\Hh_2=\CC^3$, we get a product system $\Ee\to \NN^2$ and  an action of $S_3$ on $\Oo(\Ee)\cong \mathcal O_2\otimes \mathcal O_3$ with fixed point algebra $\Oo(\Ee)^{S_3}\cong \Oo_{\rho_1,\rho_2}$ isomorphic to a corner of the $C^*$-algebra of a  rank $2$ graph $\Lambda$.  The set of vertices is $\Lambda^0=\{\iota,\epsilon, \sigma\}$  and the edges are given by the incidence matrices

\[ M_1=\left[\begin{array}{ccc}0&0&1\\0&0&1\\1&1&1\end{array}\right], \;\; M_2=\left[\begin{array}{ccc}1&0&1\\0&1&1\\1&1&2\end{array}\right].\]
This is because
\[\iota\otimes\rho_1=\sigma,\; \epsilon\otimes \rho_1=\sigma,\; \sigma\otimes \rho_1=\iota+\epsilon+\sigma,\]
\[\iota\otimes\rho_2=\iota+\sigma,\; \epsilon\otimes\rho_2=\epsilon+\sigma,\; \sigma\otimes\rho_2=\iota+\epsilon+2\sigma.\]

We label the blue edges by $e_1, ..., e_5$ and the red edges by $f_1,...,f_8$ as in the figure

\[
\begin{tikzpicture}
\renewcommand{\ss}{\scriptstyle}
\node[inner sep=1.0pt, circle, fill=black]  (u) at (-6,0) {};
\node[below] at (u.south)  {$\ss \iota$};
\node[inner sep=1.0pt, circle, fill=black]  (v) at (-4,0) {};
\node[below] at (v.south)  {$\ss \epsilon$};
\node[inner sep=1.0pt, circle, fill=black]  (w) at (-2,0) {};
\node[below] at (w.south)  {$\ss \sigma$};
\draw[-stealth, semithick, blue] (u) to [out=45, in=135] node[above,black] {$\ss e_1$}  (w);
\draw[-stealth, semithick, blue] (w) to [out=-135, in=-45] node[below,black] {$\ss e_2$}  (u);

\draw[-stealth, semithick, blue] (v) to [out=20, in=160] node[above,black] {$\ss e_3$}  (w);
\draw[-stealth, semithick, blue] (w) to [out=-160, in=-20]  node[below,black] {$\ss e_4$} (v);
\draw[-stealth, semithick, blue] (w) .. controls (-.5,-1) and (-.5, 1) ..   node[left,black] {$\ss e_5$} (w);
\node[inner sep=1.0pt, circle, fill=black]  (x) at (1,0) {};
\node[inner sep=1.0pt, circle, fill=black]  (y) at (3,0) {};
\node[inner sep=1.0pt, circle, fill=black]  (z) at (5,0) {};
\draw[-stealth, semithick, red] (x) to [out=45, in=135] node[above,black] {$\ss f_4$}  (z);
\draw[-stealth, semithick, red] (z) to [out=-135, in=-45] node[below,black] {$\ss f_3$}  (x);
\draw[-stealth, semithick, red] (y) to [out=20, in=160] node[above,black] {$\ss f_6$}  (z);
\draw[-stealth, semithick, red] (z) to [out=-160, in=-20] node[below,black] {$\ss f_5$}  (y);
\draw[-stealth, semithick, red] (x) .. controls (-0.5,1) and (-0.5, -1) ..   node[right,black] {$\ss f_1$} (x);
\draw[-stealth, semithick, red] (z) .. controls (4,1.5) and (6, 1.5) ..   node[above,black] {$\ss f_7$} (z);
\draw[-stealth, semithick, red] (y) .. controls (1.5,1) and (1.5,-1) ..   node[right,black] {$\ss f_2$} (y);
\draw[-stealth, semithick, red] (z) .. controls (4,-1.5) and (6, -1.5) ..   node[below,black] {$\ss f_8$} (z);
\node[below] at (x.south)  {$\ss \iota$};
\node[below] at (y.south)  {$\ss \epsilon$};
\node[right] at (z.east)  {$\ss \sigma$};
\end{tikzpicture}
\]

The isometric intertwiners are
\[T_{e_1}:\Hh_\iota\to \Hh_\sigma\otimes \Hh_1, \; T_{e_2}:\Hh_\sigma\to \Hh_\iota\otimes \Hh_1, \;T_{e_3}:\Hh_\epsilon\to \Hh_\sigma\otimes \Hh_1,\]
\[T_{e_4}:\Hh_\sigma\to \Hh_\epsilon\otimes \Hh_1,\; T_{e_5}:\Hh_\sigma\to\Hh_\sigma\otimes\Hh_1,\]
\[T_{f_1}:\Hh_\iota\to \Hh_\iota\otimes\Hh_2,\; T_{f_2}:\Hh_\epsilon\to \Hh_\epsilon\otimes\Hh_2,\; T_{f_3}:\Hh_\sigma\to \Hh_\iota\otimes\Hh_2,\]
\[T_{f_4}:\Hh_\iota\to \Hh_\sigma\otimes\Hh_2,\; T_{f_5}:\Hh_\sigma\to\Hh_\epsilon\otimes \Hh_2,\; T_{f_6}:\Hh_\epsilon\to\Hh_\sigma\otimes \Hh_2,\]
\[ T_{f_7}, T_{f_8}:\Hh_\sigma\to \Hh_\sigma\otimes\Hh_2\]
such that
\[T_{e_1}T_{e_1}^*+T_{e_3}T_{e_3}^*+T_{e_5}T_{e_5}^*=I_\sigma\otimes I_1, \; T_{e_2}T_{e_2}^*=I_\iota\otimes I_1,\; T_{e_4}T_{e_4}^*=I_\epsilon\otimes I_1,\]
\[T_{f_1}T_{f_1}^*+T_{f_3}T_{f_3}^*=I_\iota\otimes I_2,\; T_{f_2}T_{f_2}^*+T_{f_5}T_{f_5}^*=I_\epsilon\otimes I_2,\]\[ T_{f_4}T_{f_4}^*+T_{f_6}T_{f_6}^*+T_{f_7}T_{f_7}^*+T_{f_8}T_{f_8}^*=I_\sigma\otimes I_2.\]
Here $I_\pi$ is the identity of $\Hh_\pi$ for $\pi\in\hat{G}$ and $I_i$ the identity of $\Hh_i$ for $ i=1,2$.
Since
\[M_1M_2=\left[\begin{array}{ccc}1&1&2\\1&1&2\\2&2&4\end{array}\right]\]
and
\[T_{e_2}T_{f_4}, T_{f_3}T_{e_1}\in Hom(\iota, \iota\otimes\rho_1\otimes\rho_2),\]
\[T_{e_2}T_{f_6}, T_{f_3}T_{e_3}\in Hom(\epsilon, \iota\otimes\rho_1\otimes\rho_2),\]
\[T_{e_2}T_{f_7}, T_{e_2}T_{f_8}, T_{f_1}T_{e_2}, T_{f_3}T_{e_5}\in Hom(\sigma, \iota\otimes\rho_1\otimes\rho_2),\]
\[T_{e_4}T_{f_4}, T_{f_5}T_{e_1}\in Hom(\iota, \epsilon\otimes\rho_1\otimes\rho_2),\]
\[T_{e_4}T_{f_6}, T_{f_5}T_{e_3}\in Hom(\epsilon, \epsilon\otimes\rho_1\otimes\rho_2),\]
\[T_{e_4}T_{f_7}, T_{e_4}T_{f_8}, T_{f_2}T_{e_4}, T_{f_5}T_{e_5}\in Hom(\sigma, \epsilon\otimes\rho_1\otimes\rho_2),\]
\[T_{e_1}T_{f_1}, T_{e_5}T_{f_4}, T_{f_7}T_{e_1}, T_{f_8}T_{e_1}\in Hom(\iota, \sigma\otimes\rho_1\otimes\rho_2),\]
\[T_{e_3}T_{f_2}, T_{e_5}T_{f_6}, T_{f_7}T_{e_3}, T_{f_8}T_{e_3}\in Hom(\epsilon,\sigma\otimes\rho_1\otimes\rho_2),\]
\[T_{e_5}T_{f_7}, T_{e_5}T_{f_8}, T_{e_3}T_{f_5}, T_{e_1}T_{f_3}, T_{f_6}T_{e_4}, T_{f_4}T_{e_2}, T_{f_7}T_{e_5}, T_{f_8}T_{e_5}\in Hom(\sigma, \sigma\otimes\rho_1\otimes\rho_2),\] 
a possible choice of commuting squares is
\[e_2f_4=f_3e_1,\; e_2f_6=f_3e_3,\; e_2f_7=f_1e_2,\; e_2f_8=f_3e_5,\; e_4f_4=f_5e_1,\; e_4f_6=f_5e_3\]
\[e_4f_7= f_2e_4,\; e_4f_8=f_5e_5,\; e_1f_1=f_7e_1,\; e_5f_4=f_8e_1,\; e_3f_2=f_7e_3,\; e_5f_6=f_8e_3,\]
\[e_5f_7=f_6e_4,\; e_5f_8=f_4e_2,\; e_3f_5=f_7e_5,\; e_1f_3=f_8e_5.\]
\medskip
This data is enough to determine a rank $2$ graph $\Lambda$ associated to $\rho_1, \rho_2$.
But this is not the only choice, since for example we could have taken 
\[e_2f_4=f_3e_1,\; e_2f_6=f_3e_3,\; e_2f_8=f_1e_2,\; e_2f_7=f_3e_5,\; e_4f_4=f_5e_1,\; e_4f_6=f_5e_3\]
\[e_4f_8= f_2e_4,\; e_4f_7=f_5e_5,\; e_1f_1=f_7e_1,\; e_5f_4=f_8e_1,\; e_3f_2=f_8e_3,\; e_5f_6=f_7e_3,\]
\[e_5f_7=f_6e_4,\; e_5f_8=f_4e_2,\; e_3f_5=f_7e_5,\; e_1f_3=f_8e_5,\]
which will determine a different $2$-graph.

A direct analysis using the definitions shows that in each case, the $2$-graph $\Lambda$ is cofinal, it satisfies the  aperiodicity condition and every vertex connects to a loop with an entrance. It follows that   $C^*(\Lambda)$ is simple and purely infinite and  the Doplicher-Roberts algebra $\Oo_{\rho_1,\rho_2}$ is Morita equivalent with $C^*(\Lambda)$. 

The $K$-theory of $C^*(\Lambda)$ can be computed using Proposition 3.16 in \cite{E} and it does not depend on the choice of factorization rules. We have 
\[K_0(C^*(\Lambda))\cong\text{coker}[I-M_1^t\;\; I-M_2^t]\oplus\ker\left[\begin{array}{c}M_2^t-I\\I-M_1^t\end{array}\right]\cong \mathbb Z/2\mathbb Z,\]
\[K_1(C^*(\Lambda))\cong\ker[I-M_1^t\;\;I-M_2^t]/\text{im}\left[\begin{array}{c}M_2^t-I\\I-M_1^t\end{array}\right]\cong 0.\]
In particular, $\Oo_{\rho_1,\rho_2}\cong \Oo_3$. 

On the other hand, since $\rho_1, \rho_2$ are faithful, both $\Oo_{\rho_1}, \Oo_{\rho_2}$ are simple and purely infinite with
\[K_0(\Oo_{\rho_1})\cong \ZZ/2\ZZ,\; K_1(\Oo_{\rho_1})\cong 0,\; K_0(\Oo_{\rho_2})\cong \ZZ,\; K_1(\Oo_{\rho_2})\cong \ZZ,\]
so $\Oo_{\rho_1,\rho_2}\ncong \Oo_{\rho_1}\otimes \Oo_{\rho_2}$.

\end{example}
\bigskip

\begin{example}
With $G=S_3$  and $\rho_1=2\iota, \rho_2=\iota+\epsilon$, then $R=\{\iota, \epsilon\}$ so  $\Lambda$ will have two vertices and incidence matrices
\[M_1=\left[\begin{array}{cc}2&0\\0&2\end{array}\right],\;\; M_2=\left[\begin{array}{cc}1&1\\1&1\end{array}\right],\]
which give 
\[
\begin{tikzpicture}
\renewcommand{\ss}{\scriptstyle}
\node[inner sep=1.0pt, circle, fill=black]  (u) at (-3,0) {};
\node[below] at (u.south)  {$\ss \iota$};
\node[inner sep=1.0pt, circle, fill=black]  (v) at (-1,0) {};
\node[below] at (v.south)  {$\ss \epsilon$};
\node[inner sep=1.0pt, circle, fill=black]  (w) at (1,0) {};
\node[below] at (w.south)  {$\ss \iota$};
\node[inner sep=1.0pt, circle, fill=black]  (x) at (3,0) {};
\node[below] at (x.south)  {$\ss \epsilon$};
\draw[-stealth, semithick, blue] (u) .. controls (-4,1.5) and (-2, 1.5) ..   node[below,black] {$\ss e_1$} (u);
\draw[-stealth, semithick, blue] (u) .. controls (-2,-1.5) and (-4, -1.5) ..   node[above,black] {$\ss e_2$} (u);
\draw[-stealth, semithick, blue] (v) .. controls (-2,1.5) and (0, 1.5) ..   node[below,black] {$\ss e_3$} (v);

\draw[-stealth, semithick, blue] (v) .. controls (0,-1.5) and (-2, -1.5) ..   node[above,black] {$\ss e_4$} (v);

\draw[-stealth, semithick, red] (w) .. controls (-0.5,1) and (-0.5, -1) ..   node[right,black] {$\ss f_1$} (w);
\draw[-stealth, semithick, red] (w) to [out=30, in=150] node[above,black] {$\ss f_2$}  (x);
\draw[-stealth, semithick, red] (x) to [out=-150, in=-30] node[below,black] {$\ss f_3$}  (w);
\draw[-stealth, semithick, red] (x) .. controls (4.5,-1) and (4.5,1) ..   node[left,black] {$\ss f_4$} (x);

\end{tikzpicture}
\]

Again, a corresponding choice of isometric intertwiners  will determine some factorization rules, for example
\[e_1f_1=f_1e_2,\; e_2f_1=f_1e_1,\; e_1f_3=f_3e_3,\; e_2f_3=f_3e_4,\]
\[e_3f_2=f_2e_1,\; e_4f_2=f_2e_2,\; e_3f_4=f_4e_4,\; e_4f_4=f_4e_3.\]
Even though $\rho_1, \rho_2$ are not faithful, the obtained $2$-graph  is cofinal, satisfies the aperiodicity condition and every vertex connects to a loop with an entrance, so $\Oo_{\rho_1,\rho_2}$ is simple and purely infinite with trivial $K$-theory.
In particular, $\Oo_{\rho_1,\rho_2}\cong \Oo_2$.

Note that since $\rho_1, \rho_2$ have kernel $N=\la(123)\ra\cong \ZZ/3\ZZ$, we could replace $G$ by $G/N\cong \ZZ/2\ZZ$ and consider $\rho_1,\rho_2$ as  representations of $\ZZ/2\ZZ$.
\end{example}

\bigskip

\begin{example}
Consider $G=\ZZ/2\ZZ=\{0,1\}$ with $\hat{G}=\{\iota,\chi\}$ and character table
\begin{center}
\begin{tabular}{c|c|r}{}&$0$ &$1$\\
\hline
$\iota$& $1$&$1$\\
\hline
$\chi$ &$1$&$-1$\\

\hline

\end{tabular}
\end{center}
Choose the $2$-dimensional representations
\[\rho_1=\iota+\chi,\; \rho_2=2\iota,\; \rho_3=2\chi,\]
which determine a product system $\Ee$ such that $\Oo(\Ee)\cong \Oo_2\otimes\Oo_2\otimes \Oo_2$ and a Doplicher-Roberts algebra $\Oo_{\rho_1,\rho_2,\rho_3}\cong \Oo(\Ee)^{\ZZ/2\ZZ}$.

An easy computation shows that the incidence matrices of the blue, red and green graphs are
\[M_1=\left[\begin{array}{cc}1&1\\1&1\end{array}\right],\; M_2=\left[\begin{array}{cc}2&0\\0&2\end{array}\right],\; M_3=\left[\begin{array}{cc}0&2\\2&0\end{array}\right].\]

\[
\begin{tikzpicture}
\renewcommand{\ss}{\scriptstyle}
\node[inner sep=1.0pt, circle, fill=black]  (u) at (-5,0) {};
\node[below] at (u.south)  {$\ss \iota$};
\node[inner sep=1.0pt, circle, fill=black]  (v) at (-3,0) {};
\node[below] at (v.south)  {$\ss \chi$};
\node[inner sep=1.0pt, circle, fill=black]  (w) at (-1,0) {};
\node[below] at (w.south)  {$\ss \iota$};
\draw[-stealth, semithick, blue] (u) .. controls (-6.5,1) and (-6.5, -1) ..   node[right,black] {$\ss e_1$} (u);
\draw[-stealth, semithick, blue] (u) to [out=30, in=150] node[above,black] {$\ss e_2$}  (v);
\draw[-stealth, semithick, blue] (v) to [out=-150, in=-30] node[below,black] {$\ss e_3$}  (u);
\draw[-stealth, semithick, blue] (v) .. controls (-1.5,-1) and (-1.5,1) ..   node[left,black] {$\ss e_4$} (v);

\node[inner sep=1.0pt, circle, fill=black]  (x) at (1,0) {};
\node[inner sep=1.0pt, circle, fill=black]  (y) at (3,0) {};
\node[inner sep=1.0pt, circle, fill=black]  (z) at (5,0) {};

\draw[-stealth, semithick, red] (w) .. controls (-2,1.5) and (0, 1.5) ..   node[below,black] {$\ss f_1$} (w);
\draw[-stealth, semithick, red] (w) .. controls (0,-1.5) and (-2, -1.5) ..   node[above,black] {$\ss f_2$} (w);
\draw[-stealth, semithick, red] (x) .. controls (0,1.5) and (2, 1.5) ..   node[below,black] {$\ss f_3$} (x);

\draw[-stealth, semithick, red] (x) .. controls (2,-1.5) and (0, -1.5) ..   node[above,black] {$\ss f_4$} (x);

\draw[-stealth, semithick, green] (z) to [out=100, in=80] node[above,black] {$\ss g_1$} (y);
\draw[-stealth, semithick, green] (z) to [out=160, in=20] node[above,black] {$\ss g_2$} (y);
\draw[-stealth, semithick, green] (y) to [out=-80, in=-100] node[below,black] {$\ss g_4$} (z);
\draw[-stealth, semithick, green] (y) to [out=-20, in=-160] node[below,black] {$\ss g_3$} (z);
\node[below] at (x.south)  {$\ss \chi$};
\node[below] at (y.south)  {$\ss \iota$};
\node[below] at (z.east)  {$\ss \chi$};
\end{tikzpicture}
\]

With labels as in the figure, we  choose the following  factorization rules

\[e_1f_1=f_2e_1,\; e_1f_2=f_1e_1,\; e_2f_1=f_4e_2,\; e_2f_2=f_3e_2,\]
\[e_3f_3=f_2e_3,\; e_3f_4=f_1e_3,\; e_4f_4=f_3e_4,\; e_4f_3=f_4e_4,\]

\[f_1g_1=g_2f_3,\; f_1g_2=g_1f_3,\; f_2g_1=g_2f_4,\; f_2g_2=g_1f_4,\]
\[f_3g_3=g_4f_1,\; f_3g_4=g_3f_1,\; f_4g_3=g_4f_2,\; f_4g_4=g_3f_2,\]

\[e_1g_1=g_2e_4,\; e_1g_2=g_1e_4,\; e_2g_1=g_3e_3,\; e_2g_2=g_4e_3,\]
\[e_3g_3=g_1e_2,\; e_3g_4=g_2e_2,\; e_4g_3=g_4e_1,\; e_4g_4=g_3e_1.\]

\medskip

A tedious verification shows that  all the following paths are well defined
\[e_1f_1g_1,\; e_1f_1g_2,\; e_1f_2g_1, \; e_1f_2g_2,\; e_2f_1g_1,\; e_2f_1g_2,\; e_2f_2g_1,\; e_2f_2g_2,\]
\[e_3f_3g_3,\; e_3f_3g_4,\; e_3f_4g_3,\; e_3f_4g_4,\; e_4f_3g_3,\; e_4f_3g_4,\; e_4f_4g_3,\; e_4f_4g_4,\] 
so the associativity property is satisfied and we get a rank $3$ graph $\Lambda$ with $2$ vertices. It is not difficult to check that $\Lambda$ is cofinal, it satisfies the  aperiodicity condition and every vertex connects to a loop with an entrance, so $C^*(\Lambda)$ is simple and purely infinite.

Since $\partial_1=[I-M_1^t\; I-M_2^t\; I-M_3^t]:\ZZ^6\to \ZZ^2$ is surjective, using Corollary 3.18 in \cite{E}, we obtain
\[K_0(C^*(\Lambda))\cong \ker\partial_2/\text{im}\; \partial_3\cong 0,\;K_1(C^*(\Lambda))\cong \ker\partial_1/\text{im}\; \partial_2\oplus \ker\partial_3\cong 0,\]
where
\[\partial_2=\left[\begin{array}{ccc} M_2^t-I&M_3^t-I&0\\I-M_1^t&0&M_3^t-I\\0&I-M_1^t&I-M_2^t\end{array}\right],\;\;\partial_3=\left[\begin{array}{c}I-M_3^t\\M_2^t-I\\I-M_1^t\end{array}\right],\]
in particular $\Oo_{\rho_1,\rho_2,\rho_3}\cong\Oo_2$.

%Note that $M_1M_2M_3$ has strictly positive entries, hence   $\{M_1, M_2, M_3\}$ is an irreducible family of matrices and $\Lambda$ is irreducible, even though for example $M_2$ is not irreducible (because the tensor powers of $\rho_2$ do not contain $\chi$) and the red graph is disconnected. 

\end{example}

\bigskip
\begin{example}
Let $G=\TT$. We have $\hat{G}=\{\chi_k:k\in \ZZ\}$, where $\chi_k(z)=z^k$ and $\chi_k\otimes\chi_\ell=\chi_{k+\ell}$. The  faithful representations \[\rho_1=\chi_{-1}+\chi_0,\; \rho_2=\chi_0+\chi_1\] of $\TT$ will determine a product system $\Ee$ with $\Oo(\Ee)\cong \Oo_2\otimes \Oo_2$ and a Doplicher-Roberts algebra $\Oo_{\rho_1,\rho_2}\cong \Oo(\Ee)^\TT$ isomorphic to a corner in the $C^*$-algebra of a rank $2$ graph $\Lambda$ with $\Lambda^0=\hat{G}$ and infinite incidence matrices, where
\[M_1(\chi_k,\chi_\ell)=\begin{cases}1 &\text{if}\; \ell=k \;\text{or}\; \ell=k-1\\0& \text{otherwise,}\end{cases}\]
\[M_2(\chi_k,\chi_\ell)=\begin{cases} 1 &\text{if}\; \ell=k \;\text{or}\; \ell=k+1\\0& \text{otherwise.}\end{cases}\]
The skeleton  of $\Lambda$ looks like

\[
\begin{tikzpicture}
\renewcommand{\ss}{\scriptstyle}
\node[inner sep=1.0pt, circle, fill=black]  (u) at (-3,0) {};
\node[right] at (u.east)  {$\ss \chi_{-1}$};
\node[left] at (u.west) {$ \cdots$};
\node[inner sep=1.0pt, circle, fill=black]  (v) at (-1,0) {};
\node[right] at (v.east)  {$\ss \chi_0$};
\node[inner sep=1.0pt, circle, fill=black]  (w) at (1,0) {};
\node[right] at (w.east)  {$\ss \chi_1$};
\node[inner sep=1.0pt, circle, fill=black]  (x) at (3,0) {};
\node[right] at (x.east)  {$\ss \chi_2$};
\node[right] at (x.east)  {$\hspace{5mm}\cdots$};
\draw[-stealth, semithick, blue] (u) .. controls (-4,1.5) and (-2, 1.5) ..   (u);
\draw[-stealth, semithick, red] (u) .. controls (-4,-1.5) and (-2, -1.5) ..   (u);
\draw[-stealth, semithick, blue] (v) .. controls (-2,1.5) and (0, 1.5) ..    (v);
\draw[-stealth, semithick, red] (v) .. controls (-2,-1.5) and (0, -1.5) ..  (v);
\draw[-stealth, semithick, blue] (w) .. controls (0,1.5) and (2, 1.5) ..    (w);
\draw[-stealth, semithick, red] (w) .. controls (0,-1.5) and (2, -1.5) ..   (w);
\draw[-stealth, semithick, blue] (x) .. controls (2,1.5) and (4, 1.5) ..    (x);
\draw[-stealth, semithick, red] (x) .. controls (2,-1.5) and (4, -1.5) ..   (x);
\draw[-stealth, semithick, blue] (u) to [out=30, in=150]  (v);
\draw[-stealth, semithick, red] (v) to [out=-150, in=-30]   (u);
\draw[-stealth, semithick, blue] (v) to [out=30, in=150]   (w);
\draw[-stealth, semithick, red] (w) to [out=-150, in=-30]   (v);
\draw[-stealth, semithick, blue] (w) to [out=30, in=150]   (x);
\draw[-stealth, semithick, red] (x) to [out=-150, in=-30]   (w);
\end{tikzpicture}
\]
and this $2$-graph is cofinal, satisfies the aperiodicity condition and every vertex connects to a loop with an entrance, so $C^*(\Lambda)$ is  simple and purely infinite.
\end{example}
\bigskip
\begin{example}
Let $G=SU(2)$. It is known (see p.84 in \cite{BD}) that the elements in $\hat{G}$ are labeled by $V_n$ for $n\ge 0$, where $V_0=\iota$ is the trivial representation on $\CC$, $V_1$ is the standard representation of $SU(2)$ on $\CC^2$, and for $n\ge 2$, $V_n=S^nV_1$, the $n$-th symmetric power. In fact, $\dim V_n=n+1$ and $V_n$ can be taken as the representation of $SU(2)$ on the space of homogeneous polynomials $p$ of degree $n$ in variables $z_1,z_2$, where for $\ds g=\left[\begin{array}{cc} a&b\\c&d\end{array}\right]\in SU(2)$ we have
\[(g\cdot p)(z)=p(az_1+cz_2, bz_1+dz_2).\]

The irreducible representations $V_n$ satisfy the Clebsch-Gordan formula
\[V_k\otimes V_\ell=\bigoplus_{j=0}^qV_{k+\ell-2j},\; q=\min\{k,l\}.\]
If we choose $\rho_1=V_1, \rho_2=V_2$, then we get a product system $\Ee$ with $\Oo(\Ee)\cong \Oo_2\otimes \Oo_3$ and a Doplicher-Roberts algebra $\Oo_{\rho_1,\rho_2}\cong \Oo(\Ee)^{SU(2)}$ isomorphic to a corner in the $C^*$-algebra of a rank $2$ graph with $\Lambda^0=\hat{G}$ and edges given by the matrices
\[M_1(V_k,V_\ell)=\begin{cases}1&\text{if}\; k=0\;\text{and}\; \ell=1\\ 1& \text{if}\; k\ge 1\;\text{and}\; \ell \in\{k-1,k+1\}\\0&\text{otherwise,}\end{cases}\]
\[M_2(V_k, V_\ell)=\begin{cases} 1 &\text{if}\; k=0\;\text{and}\; \ell=2\\1&\text{if}\; k=1\;\text{and}\; \ell\in  \{1,3\}\\ 1&\text{if}\; k\ge 2\;\text{and}\; \ell\in\{k-2,k,k+2\}\\0&\text{otherwise.}\end{cases}\]
The skeleton  looks like

\[
\begin{tikzpicture}
\renewcommand{\ss}{\scriptstyle}
\node[inner sep=1.0pt, circle, fill=black]  (u) at (-5,0) {};
\node[right] at (u.east)  {$\ss V_0$};

\node[inner sep=1.0pt, circle, fill=black]  (v) at (-3,0) {};
\node[right] at (v.east)  {$\ss V_1$};
\node[inner sep=1.0pt, circle, fill=black]  (w) at (-1,0) {};
\node[right] at (w.east)  {$\ss V_2$};
\node[inner sep=1.0pt, circle, fill=black]  (x) at (1,0) {};
\node[right] at (x.east)  {$\ss V_3$};
\node[inner sep=1.0pt, circle, fill=black]  (y) at (3,0) {};
\node[right] at (y.east)  {$\ss V_4$};
\node[inner sep=1.0pt, circle, fill=black]  (z) at (5,0) {};
\node[right] at (z.east)  {$\ss V_5$};
\node[right] at (z.east)  {$\hspace{5mm}\cdots$};

\draw[-stealth, semithick, red] (v) .. controls (-2.5,-1) and (-3.5, -1) ..   (v);
\draw[-stealth, semithick, red] (w) .. controls (-0.5,-1) and (-1.5, -1) ..   (w);

\draw[-stealth, semithick, red] (x) .. controls (1.5,-1) and (0.5, -1) ..   (x);
\draw[-stealth, semithick, red] (y) .. controls (3.5,-1) and (2.5, -1) ..   (y);
\draw[-stealth, semithick, red] (z) .. controls (5.5,-1) and (4.5, -1) ..   (z);
\draw[-stealth, semithick, blue] (u) to [out=30, in=150]  (v);
\draw[-stealth, semithick, red] (u) to [out=45, in=135]  (w);
\draw[-stealth, semithick, red] (w) to [out=-135, in=-45]  (u);
\draw[-stealth, semithick, red] (v) to [out=45, in=135]  (x);
\draw[-stealth, semithick, red] (x) to [out=-135, in=-45]  (v);
\draw[-stealth, semithick, red] (w) to [out=45, in=135]  (y);
\draw[-stealth, semithick, red] (y) to [out=-135, in=-45]  (w);
\draw[-stealth, semithick, red] (x) to [out=45, in=135]  (z);
\draw[-stealth, semithick, red] (z) to [out=-135, in=-45]  (x);
\draw[-stealth, semithick, blue] (v) to [out=-150, in=-30]   (u);
\draw[-stealth, semithick, blue] (v) to [out=30, in=150]  (w);
\draw[-stealth, semithick, blue] (w) to [out=-150, in=-30]   (v);
\draw[-stealth, semithick, blue] (w) to [out=30, in=150]   (x);
\draw[-stealth, semithick, blue] (x) to [out=-150, in=-30]   (w);
\draw[-stealth, semithick, blue] (x) to [out=30, in=150]   (y);
\draw[-stealth, semithick, blue] (y) to [out=-150, in=-30]  (x);
\draw[-stealth, semithick, blue] (y) to [out=30, in=150]   (z);
\draw[-stealth, semithick, blue] (z) to [out=-150, in=-30]  (y);
\end{tikzpicture}
\]
and this $2$-graph is cofinal, satisfies the aperiodicity condition and every vertex connects to a loop with an entrance, in particular $\Oo_{\rho_1,\rho_2}$ is  simple and purely infinite.

\end{example}

\bigskip

\end{document}